\theoremstyle{plain}
\newtheorem{theorem}{Theorem}[section]
\newtheorem{corollary}[theorem]{Corollary}
\newtheorem{proposition}[theorem]{Proposition}
\newtheorem{lemma}[theorem]{Lemma}
\newtheorem{definition}[theorem]{Definition}
\theoremstyle{remark}
\newtheorem{remark}{Remark}
\newcommand{\R}{{\mathbb{R}}}
\newcommand{\Z}{{\mathbb{Z}}}
\newcommand{\ind}{{\rm {ind}}}
\newcommand{\Hom}{{\rm {Hom}}}
\newcommand{\W}{\mathcal W}
\title{The Topology of Spaces of Polygons}
\author{Michael Farber and Viktor Fromm}                 
\address{Department of Mathematical Sciences, University of Durham, UK}
\email{MichaelSFarber@googlemail.com}
\email{viktor.fromm@durham.ac.uk}
\date{April 11, 2011}
\begin{document}

\maketitle

\begin{abstract}
Let $E_{d}(\ell)$ denote the space of all closed $n$-gons in $\R^{d}$ (where $d\ge 2$) with sides of length $\ell_1, \dots, \ell_n$, viewed up to translations. 
The spaces $E_d(\ell)$ 
are parameterized by their length vectors $\ell=(\ell_1, \dots, \ell_n)\in \R^n_{>}$ encoding the length parameters. 
Generically, $E_{d}(\ell)$ is a closed smooth manifold of dimension $(n-1)(d-1)-1$ supporting an obvious action of the orthogonal group ${ {O}}(d)$. However, the
quotient space $E_{d}(\ell)/{{O}}(d)$ (the moduli space of shapes of $n$-gons) has singularities for a generic $\ell$, assuming that $d>3$; this quotient
is well understood in the low dimensional cases $d=2$ and $d=3$. 
Our main result in this paper states that for fixed $d\ge 3$ and $n\ge 3$, the diffeomorphism types of the manifolds $E_{d}(\ell)$ for varying generic vectors $\ell$ are in one-to-one correspondence with some combinatorial objects -- connected components of the complement of a finite collection of hyperplanes. This result is in the spirit of  a conjecture of K. Walker who raised a similar problem in the planar case $d=2$.
\end{abstract}

\section{Introduction} It is well-known that essentially any closed smooth manifold can be realized as the configuration space of a mechanism \cite{JS}, \cite{KM3}. 
One may therefore consider the following inverse problem:
{\it Is it possible to determine metric parameters of a mechanism knowing the topology of its configuration space?}
A statement in this spirit (see \cite{Wa}), 
was proven in \cite{Fa4} for configuration spaces of linkages in $\R^3$, while the case of planar linkages was settled in \cite{Sch}. 
These results answered positively the conjecture raised by Kevin Walker \cite{Wa} in 1985. 
In paper \cite{FHS} the inverse problem was solved for a class of spaces of polygonal chains. 

In this paper we consider the inverse problem for the spaces of polygons of arbitrary dimension $d\ge 2$. Let $\ell=(l_{1}, \dots, l_{n})\in \R^n_>$ be a vector (called {\it the length vector}) with positive real coordinates $l_{1}, \dots, l_{n}$. 
For every $d \geq 2$, consider the space $E_{d}(\ell)$ given by 
$$E_{d}(\ell)=\{(u_{1}, \dots, u_{n}) \in (S^{d-1})^{n}: \sum \limits_{k=1}^n l_{k}u_{k}=0\}.$$ 
The points of $E_d(\ell)$ can be understood as closed $n$-gons in $\R^{d}$ with sides of length $l_{1}, \dots, l_{n}$, viewed up to Euclidean translations.
The question we study in this paper is whether one may determine $\ell$ (up to certain natural equivalence) knowing the homeomorphism type of the manifold $E_d(\ell)$. 

The orthogonal groups ${\rm O}(d)$ and ${\rm {SO}}(d)$ act naturally on $E_d(\ell)$ and the quotient $E_d(\ell)/{\rm {SO}}(d)$ is the moduli space of shapes of closed 
$n$-gons in $\R^d$ with sides $l_1, \dots, l_n$. In the cases $d=2$ and $d=3$ this quotient is generically a closed smooth manifold which is well-understood \cite{HK}, \cite{KM1}, \cite{Ka}, \cite{Fa5}. 
However for $d>3$ the quotient space $E_d(\ell)/{\rm {SO}}(d)$ 
has singularities for a generic $\ell$. This explains our choice in this paper to study rather the manifold $E_d(\ell)$ and not the quotient $E_d(\ell)/{\rm {SO}}(d)$, since we intend to include the case $d>3$.

\begin{definition} 
A length vector $\ell$ is called {\it generic} if there is no subset $J \subset \{1, \dots, n\}$ so that $ \sum \limits_{j \in J}l_{j} = \sum \limits_{j \notin J}l_{j}.$
\end{definition}

For a generic length vector $\ell\in \R^n_>$ the space $E_{d}(\ell)$ is a closed smooth manifold of dimension 
$$\dim E_d(\ell) = (n-1)(d-1)-1,$$
see Proposition \ref{gen}.

Given a permutation $\sigma: \{1, \dots, n\}  \to \{1, \dots, n\}$ and a length vector $\ell\in\R^n_>$ one may define $\sigma(\ell)\in \R^n_>$
by $\sigma(\ell) =(l_{\sigma(1)}, \dots, l_{\sigma(n)})$. Obviously the map  $(u_1, \dots, u_n) \mapsto (u_{\sigma(1)}, \dots, u_{\sigma(n)})$ is a diffeomorphism 
$E_d(\ell) \to E_d(\sigma(\ell))$. Thus we see that the length vector $\sigma(\ell)$ obtained by permuting the coordinates of $\ell$ defines a diffeomorphic manifold. 

\begin{definition}
We say that two generic length vectors $\ell, \ell'\in \R^n_>$, where 
$\ell=(l_1, \dots, l_n)$ and $\ell'=(l'_1, \dots, l'_n)$, lie in the same chamber 
if for any subset $J \subset \{1, \dots, n\}$ 
one has $ \sum \limits_{j \in J}l_{j} > \sum \limits_{j \notin J}l_{j}$
if and only if
$ \sum \limits_{j \in J}l'_{j} >\sum \limits_{j \notin J}l'_{j}.$
\end{definition}

It is clear that the set of all vectors $\ell'$ lying in the same chamber with a given vector $\ell$ is a convex set. 

Our main result in this paper states:

\begin{theorem} \label{Wal}
Let $\ell,\ell'\in \R^n_>$ be two generic length vectors and let $d \geq 3$. The following conditions are equivalent:

(a) The manifolds $E_{d}(\ell)$ and $E_{d}(\ell')$ are ${\rm O}(d)$-equivariantly diffeomorphic;

(b) The cohomology rings $H^{*}(E_{d}(\ell);\Z_{2})$ and $H^{*}(E_{d}(\ell');\Z_{2})$ are isomorphic as graded rings.  

(c) The rings $H^{(d-1)*}(E_{d}(\ell);\Z_{2})$ and $H^{(d-1)*}(E_{d}(\ell');\Z_{2})$ are isomorphic.  

(d) For some permutation $\sigma: \{1, \dots, n\} \to \{1, \dots, n\}$, the length vectors $\ell$ and $\sigma(\ell')$ lie in the same chamber; 

\end{theorem}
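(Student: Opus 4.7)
I would prove the four conditions are equivalent by closing the cycle $(d)\Rightarrow(a)\Rightarrow(b)\Rightarrow(c)\Rightarrow(d)$. The implications $(a)\Rightarrow(b)$ and $(b)\Rightarrow(c)$ are immediate: a diffeomorphism induces a graded-ring isomorphism on $\Z_2$-cohomology, and $(c)$ is just the restriction of $(b)$ to the subring supported in degrees divisible by $d-1$. So the real work splits into the geometric implication $(d)\Rightarrow(a)$ and the algebraic-to-combinatorial implication $(c)\Rightarrow(d)$, with the latter being the main obstacle.

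\textbf{Proof of $(d)\Rightarrow(a)$.} By precomposing with the diffeomorphism $E_d(\ell')\to E_d(\sigma(\ell'))$ already discussed in the introduction, I may assume $\sigma=\mathrm{id}$, so that $\ell$ and $\ell'$ lie in the same chamber. Since chambers are convex, the straight segment $\ell_t=(1-t)\ell+t\ell'$, $t\in[0,1]$, consists of generic length vectors. Consider the total space
\[
\mathcal{E}=\bigl\{(t,u_1,\dots,u_n)\in[0,1]\times (S^{d-1})^n : \textstyle\sum_k (\ell_t)_k u_k=0\bigr\}.
\]
Fibrewise, Proposition \ref{gen} ensures that $0$ is a regular value of the constraint map on each slice, so by parametric transversality $\mathcal{E}$ is a smooth manifold with boundary and the projection $\pi\colon\mathcal{E}\to[0,1]$ is a proper submersion. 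Ehresmann's theorem yields a diffeomorphism $\mathcal{E}\cong[0,1]\times E_d(\ell)$, restricting to a diffeomorphism $E_d(\ell)\cong E_d(\ell')$. Because the diagonal action of $\mathrm{O}(d)$ preserves the defining equation for each $t$, an invariant horizontal vector field can be constructed by averaging, and integrating it yields an $\mathrm{O}(d)$-equivariant trivialization, giving $(a)$.

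\textbf{Proof of $(c)\Rightarrow(d)$: the main obstacle.} This is where the real content of the theorem lies. The plan is first to obtain an explicit presentation of the ring $H^{(d-1)*}(E_d(\ell);\Z_2)$, ideally generated by classes $V_1,\dots,V_n$ of degree $d-1$ pulled back from the volume classes of the sphere factors, with relations indexed combinatorially by the collection
\[
\mathcal{L}(\ell)=\bigl\{L\subset\{1,\dots,n\} : \textstyle\sum_{j\in L}l_j>\sum_{j\notin L}l_j\bigr\}
\]
of long subsets. Such a description can be obtained either by a Morse-theoretic analysis of the height function on $E_d(\ell)$ in the spirit of the planar case (Hausmann--Knutson, Farber--Schütz), or by viewing $E_d(\ell)$ as a level set in $(S^{d-1})^n$ and extracting the quotient of its exterior algebra by the appropriate ideal. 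The key observation should be that the graded $\Z_2$-algebra in degrees a multiple of $d-1$ is, up to regrading, independent of $d$, so one is reduced to the algebraic content of the planar situation.

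\textbf{Recovering the chamber.} With such a presentation in hand, the remaining step is a purely algebraic reconstruction: to show that any graded-ring isomorphism $H^{(d-1)*}(E_d(\ell);\Z_2)\cong H^{(d-1)*}(E_d(\ell');\Z_2)$ forces the collections $\mathcal{L}(\ell)$ and $\mathcal{L}(\ell')$ to agree up to a permutation of $\{1,\dots,n\}$, which by definition is condition $(d)$. This is the planar case of Walker's conjecture settled by Schütz in \cite{Sch}, and I expect the proof to transport his argument via the $(d-1)$-fold regrading mentioned above. Concretely, one identifies the subring generated by the $V_i$ intrinsically (e.g.\ as the image of a characteristic map, or via divisibility/annihilator invariants of top-degree classes), reads off, from annihilators of monomials $\prod_{i\in S}V_i$, which complementary subsets are long, and finally observes that the unordered collection $\mathcal{L}(\ell)$ up to a relabelling of generators is the precise data of the chamber modulo $\mathfrak{S}_n$. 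The technical heart is ensuring that the generators $V_i$ are distinguishable up to permutation from the intrinsic ring structure alone, which is the step demanding the most care and where the hypothesis $d\ge3$ (hence $d-1\ge2$, so the $V_i$ are not nilpotent of order 2 for trivial reasons) is used.
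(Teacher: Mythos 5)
Your implication $(d)\Rightarrow(a)$ matches the paper's Proposition \ref{prop32} (convexity of chambers, the cobordism $F^{-1}(I)$ over the segment, and the $\mathrm{O}(d)$-invariant gradient flow), and $(a)\Rightarrow(b)\Rightarrow(c)$ is indeed trivial. The gap is in $(c)\Rightarrow(d)$, which you correctly identify as the heart of the theorem but do not actually prove. Two concrete problems. First, the presentation you posit is not quite the one that exists: in the generators $X_i$ pulled back from the volume classes of the sphere factors, the kernel of $H^{(d-1)*}(W;\Z_2)\to H^{(d-1)*}(E_d(\ell);\Z_2)$ is \emph{not} a monomial ideal --- besides the monomials $X^J$ for long $J\ni n$ it contains the polynomials $\sum_{j\in J}X^{J_j}$ coming from the diagonal-type classes $[W_J]$. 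The paper must pass to the new generators $Z_j=X_j+X_n$, $Z_n=X_n$ precisely so that the ideal becomes monomial, generated by the $Z^J$ with $n\notin J$ and $J\cup\{n\}$ long. Second, and more importantly, the step you defer (``ensuring that the generators are distinguishable up to permutation from the intrinsic ring structure alone'') is exactly the content of Gubeladze's theorem on the isomorphism problem for monomial ideals, which the paper invokes after first discarding the generators already lying in the ideal (the indices $j$ with $\{j,n\}$ long). Without that input, or a proof of an equivalent rigidity statement, the argument does not close. Your suggested substitute --- transporting Sch\"utz's planar argument by regrading --- does not work as stated: Sch\"utz's theorem concerns the moduli space $M_\ell=E_2(\ell)/\mathrm{SO}(2)$, whose cohomology ring (the Hausmann--Knutson presentation, with degree-one classes and non-monomial relations) is not a regraded copy of $\Lambda_{d}(Z_{1}, \dots, Z_{n})/I$.

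Two smaller points. (i) Even granting the Gubeladze step one is not done: the resulting bijection of variables need not fix the distinguished index $n$, and the paper needs an extra transposition $\alpha$ together with Lemma 4 of \cite{Fa4} (a chamber is determined by its short subsets containing $n$) to conclude that $\ell$ and $\sigma(\ell')$ lie in the same chamber; your claim that the collection of all long subsets up to relabelling ``is the precise data of the chamber'' silently assumes this. (ii) Your stated reason for the hypothesis $d\ge 3$ is off: over $\Z_2$ the generators square to zero in every degree; the hypothesis is actually used because the Morse--Bott lacunary computation of $H^{(d-1)*}$ requires the homology of $W-E_d(\ell)$ to be concentrated in degrees divisible by $d-1\ge 2$, and the $d=2$ case genuinely needs different techniques.
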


The equivalence (a) $\Leftrightarrow$ (d) can be interpreted as follows. For any subset $J\subset \{1, \dots, n\}$ consider the hyperplane $H_J$ in $\R^n$ with coordinates 
$l_1, \dots, l_n$ given by the equation $$\sum_{i\in J} l_i = \sum_{i\notin J} l_i.$$ 
Varying $J$ we obtain $2^{n-1}$ hyperplanes $H_J$ (since $J$ and its complement determine the same hyperplane). The complement of the union of these hyperplanes
\begin{eqnarray}\label{compl} 
\R^n_> - \bigcup_J H_J\end{eqnarray}
consists of all generic length vectors and the permutation group of $n$ symbols $\Sigma_n$ acts naturally on it. 
Theorem \ref{Wal} states that for given $n$ and $d$ the ${\rm O}(d)$-equivariant diffeomorphism types of manifolds $E_d(\ell)$ for varying generic length vectors $\ell$, are in one-to-one correspondence with 
the $\Sigma_n$-orbits of the connected components of (\ref{compl}). Thus we obtain a complete classification of the spaces of polygons $E_d(\ell)$ in purely combinatorial terms. 

The projection $p: E_d(\ell)\to S^{d-1}$ given by $p(u_1, \dots, u_n)= u_n$ is a fibration and its fiber is the chain space ${\mathcal C}_d^n(\ell)$ studied in \cite{FHS}. 
The main result of \cite{FHS} requires that the length vector $\ell$ is {\it dominated}, see \cite{FHS}, page 2. This assumption is not present in Theorem \ref{Wal} dealing with the spaces of polygons instead of the chain spaces.

In the case $d=2$ the space of polygons $E_d(\ell)$ is diffeomorphic to the product $M_\ell\times S^1$ where $M_\ell=E_2(\ell)/{\rm SO}(2)$ is the moduli space of planar polygons. 
From \cite{FHS} and \cite{Sch} we know that two generic length vectors $\ell, \ell'\in \R^n_>$ lie in the same chamber iff the integral cohomology rings $H^\ast(M_\ell;\Z)$ and 
$H^\ast(M_{\ell'};\Z)$ are isomorphic. 

The proof of Theorem \ref{Wal} is given in \S 5. 
We invoke techniques quite similar to those used in \cite{Fa4} and in \cite{FHS}, applying Morse theory and exploiting the result of Gubeladze \cite{Gub} on isomorphism problem for monoidal rings. However in this paper we require one additional tool - a lacunary principle for Morse - Bott functions which is developed in section \S \ref{sec:lac}; we believe that it is of independent interest and can be used in many other situations. 


In section 6 below we give an example of two length vectors $\ell,\ell'$ so that the corresponding spaces $E_{d}(\ell)$ and $E_{d}(\ell')$ have identical $\Z_{2}$-Betti numbers 
but lie in different orbits of chambers under the permutation action. Thus,  one is unable to distinguish between different orbits of chambers using the Betti numbers alone, i.e. 
without exploiting the product structure of the cohomology ring $H^{*}(E_{d}(\ell);\Z_{2})$.

\section{A Lacunary Principle for Morse-Bott Functions}\label{sec:lac}

The classical lacunary principle in its simplest form states that a Morse function is perfect if  the Morse indices of all critical points are divisible by an integer $k\ge 2$. Recall that perfectness of a Morse function means that 
the Morse inequalities are satisfied as equalities. In this section we propose a generalization of the lacunary principle for Morse - Bott functions, which will be used in the proof of Theorem \ref{Wal}. 

\begin{definition} \label{lacunary}Let $k\ge 2$ be an integer. A closed manifold $C$ is called $k$-lacunary if the homology groups $H_{j}(C;\Z)$ have no torsion and are trivial in all dimensions $j$ which are not divisible by $k$. 
\end{definition}

\begin{proposition}\label{Lac} Let $M$ be a smooth compact manifold, possibly with boundary. 
Let $f: M\to \R$ be a smooth function which is nondegenerate in the sense of Bott. If $\partial M\not= \emptyset$ we will additionally assume that $\partial M$ coincides with the set of points where $f$ achieves its maximum and $df\not=0$ on $\partial M$. Suppose that 
for some $k\ge 2$, each connected critical submanifold 
$C\subset M$ of $f$ is $k$-lacunary (see Definition \ref{lacunary}) and the Morse - Bott index  $\ind_{f}(C)$ of $C$ is divisible by $k$. Then $f$ is perfect, i.e. 
\begin{eqnarray}\label{iso1}
H_{*}(M;\Z) \simeq \bigoplus_{C \subset {\rm {Crit}}(f)}H_{*-\ind_{f}(C)}(C;\Z) ,
\end{eqnarray}
where $C$ runs over the connected components of the set of critical points of $f$. 
\end{proposition}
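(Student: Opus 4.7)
The strategy is the natural Morse-Bott analogue of the classical lacunary principle: build $M$ by successive sublevel sets, compute each relative homology via the Thom isomorphism on the negative disk bundle, and use a parity-gap argument to force every connecting homomorphism to vanish.

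First I would reduce the boundary case. Because $df\neq 0$ along $\partial M$ and $f|_{\partial M}$ attains the global maximum, the flow of $-\nabla f$ provides a collar of $\partial M$ in $M$, so $M$ deformation retracts onto the sublevel set $f^{-1}((-\infty,a])$ for any regular value $a$ slightly below $\max f$. This retract is homotopy equivalent to $M$ and contains every critical submanifold of $f$, so I may work purely with sublevel sets and treat $f$ exactly as in the closed case.

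Next, order the critical values $c_{1}<\cdots<c_{N}$, choose regular values $a_{0}<c_{1}<a_{1}<\cdots<c_{N}<a_{N}$, and set $M_{i}:=f^{-1}((-\infty,a_{i}])$, with $M_{0}=\emptyset$. The standard Morse-Bott attaching lemma identifies $M_{i}$ up to homotopy with $M_{i-1}$ together with the negative disk bundles $D(\nu^{-}_{C})$ glued along their sphere bundles $S(\nu^{-}_{C})$, as $C$ ranges over the critical submanifolds of $f$ at level $c_{i}$. The Thom isomorphism applied to each $\nu^{-}_{C}$ then gives
$$H_{*}(M_{i},M_{i-1};\Z)\;\cong\;\bigoplus_{C\text{ at }c_{i}}H_{*-\ind_{f}(C)}(C;\Z),$$
which by the $k$-lacunary hypothesis on $C$ and the divisibility $k\mid\ind_{f}(C)$ is free abelian and supported in degrees divisible by $k$.

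Finally I would induct on $i$ to show that $H_{*}(M_{i};\Z)$ is itself free abelian, supported in degrees divisible by $k$, and isomorphic to the direct sum of the contributions of the critical submanifolds at the levels $c_{1},\dots,c_{i}$. In the long exact sequence of the pair $(M_{i},M_{i-1})$, the connecting map $\partial\colon H_{j}(M_{i},M_{i-1})\to H_{j-1}(M_{i-1})$ has source vanishing unless $k\mid j$ and, by the inductive hypothesis, target vanishing unless $k\mid(j-1)$; since $k\ge 2$ these conditions are mutually exclusive, so $\partial=0$. The long exact sequence then breaks into short exact sequences
$$0\to H_{j}(M_{i-1};\Z)\to H_{j}(M_{i};\Z)\to H_{j}(M_{i},M_{i-1};\Z)\to 0,$$
which split because the quotient is free, completing the inductive step. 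Setting $i=N$ gives the stated isomorphism for $M$. The one non-formal point, which I expect to be the main obstacle, is the Thom isomorphism with $\Z$ coefficients, since it requires the negative normal bundles $\nu^{-}_{C}$ to be $\Z$-orientable; either this orientability must be verified directly, or the right-hand side must be interpreted with coefficients twisted by the orientation line of $\nu^{-}_{C}$. Working with $\Z_{2}$ coefficients instead sidesteps the issue entirely, which is the form of the statement actually needed for the application to Theorem \ref{Wal} in \S 5.
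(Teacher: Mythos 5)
Your argument follows the paper's proof almost step for step: the same sublevel-set filtration, the same Thom-isomorphism computation of $H_*(M_{i},M_{i-1};\Z)$, and the same parity argument (source of $\partial$ concentrated in degrees divisible by $k$, target in degrees $\equiv -1 \bmod k$, hence $\partial=0$) forcing the long exact sequences to split. Your preliminary reduction of the boundary case via the collar given by the gradient flow is a correct clarification of a point the paper passes over silently.

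The one issue you flag but leave open --- the $\Z$-orientability of the negative normal bundles $\nu^{-}_{C}$ --- is not actually an obstacle: it is supplied by the hypotheses themselves, and this is the single observation the paper adds to make the integral statement go through. Since each critical submanifold $C$ is $k$-lacunary with $k\ge 2$, the group $H_{1}(C;\Z)$ is torsion-free and vanishes (as $1$ is not divisible by $k$), so $H^{1}(C;\Z_{2})=\Hom(H_{1}(C;\Z),\Z_{2})=0$. In particular $w_{1}(\nu^{-}_{C})=0$, the unstable bundle is orientable, and the Thom isomorphism holds with untwisted $\Z$ coefficients. With that sentence inserted, your proof is complete as stated over $\Z$; retreating to $\Z_{2}$ coefficients is unnecessary.
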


\begin{proof} We will use induction on the number of critical levels of $f$. 
Let $t_0< t_1 < \dots < t_k$ be regular values of $f$ such that $f(M)\subset (t_0,t_k)$ and 
for $i=0, \dots, k-1$
each interval $(t_i, t_{i+1})$ contains a single critical value of $f$. Let $M_i$ denote $f^{-1}(-\infty, t_i]$. 
We want to show inductively that 
\begin{eqnarray}\label{iso}
H_{*}(M_i;\Z) \simeq \bigoplus_{f(C)<t_i}H_{*-\ind_{f}(C)}(C;\Z).
\end{eqnarray}

For $i=1$, the interval $(-\infty,t_1)$ contains exactly one critical value of $f$ (the minimum). Hence the desired isomorphism (\ref{iso}) follows from the fact that 
$M_1$ deformation retracts onto the union of all the critical submanifolds 
$C$, for which $f(C)<t_1$.

Observe that
our assumptions imply that for every critical submanifold $C$ of $f$ the cohomology group $H^{1}(C;\Z_{2})=\Hom(H_1(C;\Z), \Z_2)$ 
vanishes. Thus the unstable bundle of $C$ is orientable and, using excision and the Thom isomorphism, one obtains
$$H_{j}(M_{i+1},M_i;\Z) \simeq \bigoplus_{f(C) \in (t_i, t_{i+1})} H_{j-\ind_{f}(C)} (C;\Z).$$ 
In particular, we see that the non-vanishing groups $H_{j}(M_{i+1},M_i;\Z)$ are torsion free and are concentrated in degrees $j$ which are multiples of $k$. 

Arguing inductively, we can now assume
that the non-vanishing groups $H_{*}(M_i;\Z)$ are all in degrees which are multiples of $k$. Then from the long exact sequence of the pair
$(M_{i+1},M_i)$ one obtains
\begin{eqnarray}\label{isorel}
H_{*}(M_{i+1};\Z) &\simeq& H_{*}(M_i;\Z) \oplus H_{*}(M_{i+1},M_i;\Z) \\
&\simeq& H_{*}(M_i;\Z) \oplus \bigoplus_{f(C) \in (t_i, t_{i+1})} H_{*-\ind_{f}(C)} (C;\Z).\nonumber\end{eqnarray}
Combining these isomorphisms for $i=1, \dots, k$ we get the isomorphism (\ref{iso}) for $i+1$. This completes the proof. 
\end{proof}

Next we give sufficient conditions for a collection of homology classes represented by a collection of submanifolds to form a 
 basis in homology. 

\begin{proposition} \label{prop1} Suppose that additionally to the assumptions of Proposition \ref{Lac}, for each critical submanifold $C\subset M$ of $f$ we are given a closed 
submanifold 
$W_C\subset M$ and a finite collection of closed submanifolds $\W_C=\{Z; Z\subset W_C\}$ such that the following conditions are satisfied:
 \begin{enumerate}
   \item $C\subset W_C$, and $\dim W_C= \ind_f(C)+\dim C$,
   \item The function $f|W_C$ is nondegenerate in the sense of Bott and 
  achieves its maximum on $C$, 
   \item Each $Z\in \W_C$ is transversal to $C$ as a submanifold of $W_C$, 
   \item The set of homology classes    
$[Z\cap C]\in H_\ast(C; \Z_2)$, for all $Z\in \W_C$, forms a basis of $H_\ast(C; \Z_2)$. 
\end{enumerate} 
Then the set of the homology classes $[Z]\in H_\ast(M;\Z_2)$, for all $Z\in \W_C$ and for all critical submanifolds $C\subset  {\rm {Crit}}(f)$, forms a basis of $H_\ast(M;\Z_2)$. 
\end{proposition}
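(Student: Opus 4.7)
The plan is to run a filtration argument parallel to the proof of Proposition \ref{Lac}, identifying, for each critical submanifold $C$ and each $Z\in\W_C$, the class $[Z]\in H_\ast(M;\Z_2)$ with the class $[Z\cap C]\in H_\ast(C;\Z_2)$ under the Thom isomorphism that underlies the Morse--Bott decomposition.

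Choose regular values $t_0<t_1<\dots<t_N$ and set $M_i=f^{-1}(-\infty,t_i]$, as in the proof of Proposition \ref{Lac}. A dimension count comes first: by Proposition \ref{Lac} and hypothesis~(4),
\[
\dim_{\Z_2}H_\ast(M;\Z_2)=\sum_C\dim_{\Z_2}H_\ast(C;\Z_2)=\sum_C|\W_C|,
\]
so it suffices to show that the classes $[Z]_M$ span. I would argue inductively in $i$: assuming that the $[Z]_M$ coming from critical submanifolds $C'$ with $f(C')<t_i$ already span $H_\ast(M_i;\Z_2)$, I add the classes $[Z]_M$ for $Z\in\W_C$ with $f(C)\in(t_i,t_{i+1})$ and verify that their images form a basis of the quotient $H_\ast(M_{i+1},M_i;\Z_2)$. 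Tensoring (\ref{isorel}) with $\Z_2$ then supplies the splitting $H_\ast(M_{i+1};\Z_2)\simeq H_\ast(M_i;\Z_2)\oplus H_\ast(M_{i+1},M_i;\Z_2)$, which advances the induction.

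The heart of the argument is therefore the identification, under the Thom isomorphism
\[
H_\ast(M_{i+1},M_i;\Z_2)\simeq\bigoplus_{f(C)\in(t_i,t_{i+1})}H_{\ast-\ind_f(C)}(C;\Z_2)
\]
already used inside Proposition \ref{Lac}, of the image of $[Z]_M$ with the class $[Z\cap C]$. Granted this identification, hypothesis~(4) immediately furnishes the required basis and closes the inductive step.

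I expect this last identification to be the main obstacle, and I would establish it by a local model near $C$. Bott non-degeneracy at $C$ gives a splitting $\nu(C)=\nu^+\oplus\nu^-$ of the normal bundle of $C$ in $M$ with $\mathrm{rank}\,\nu^-=\ind_f(C)$. Hypotheses (1) and (2) force $T_xW_C=T_xC\oplus\nu^-_x$ at every $x\in C$: indeed $f|_{W_C}$ has $C$ as a Morse--Bott maximum and $\dim W_C-\dim C=\ind_f(C)$, so the Hessian of $f|_{W_C}$ normal to $C$ inside $W_C$ is negative-definite on a subspace of rank $\ind_f(C)$, which must then coincide with $\nu^-$. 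Inside a sufficiently small tubular neighborhood of $C$ in $M$ the implicit function theorem therefore presents $W_C$ as the graph of a section of $\nu^+$ over the unstable disk bundle $D(\nu^-)$, and condition~(3) translates into $Z$ agreeing locally with the restriction of $D(\nu^-)$ to $Z\cap C$. The handle-attachment description $(M_{i+1},M_i)\simeq(D(\nu^-),S(\nu^-))$ used in the proof of Proposition \ref{Lac}, combined with the Thom isomorphism of $\nu^-$, then sends $[Z]_M$ to $[Z\cap C]$, as required, completing the inductive step and with it the proof.
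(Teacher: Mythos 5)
Your proof follows essentially the same route as the paper: the same filtration $M_i$, the same induction using the splitting (\ref{isorel}), and the same key step of identifying the image of the relative class $[(Z,Z\cap M_i)]$ under the Thom isomorphism with $[Z\cap C]$, which the paper simply cites as well known and which you flesh out with a local model near $C$. The only quibble is in that local model: the negative subspace of $T_xW_C$ transverse to $T_xC$ need not literally coincide with $\nu^-_x$ (it is only a graph over it, since a maximal negative-definite subspace of a nondegenerate form is unique only up to such graphs), but after an isotopy this does not affect the homological identification, so the argument stands.
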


\begin{proof} 
We will use the notations introduced in the beginning of the proof of Proposition 
\ref{prop1}. 

Our inductive statement is that for $i=1, 2, \dots, k$ the set of homology classes $[Z]\in H_\ast(M_i;\Z_2)$ for all $Z\in \W_C$ and for all ctirical submanifolds $C\in {\rm {Crit}}(f)$ satisfying 
$f(C)<t_i$ forms a basis of $H_\ast(M_i;\Z_2)$. 

We know that the manifold $M_1$ deformation retracts onto the disjoint union of the critical submanifolds $C$ on which $f$ achieves its minimum. For such a $C$ we have 
$W_C=C$ and the family $\W_C=\{Z; Z\subset W_C\}$ is such that the classes $[Z]\in H_\ast(C;\Z_2)$ form a basis of $H_\ast(C;\Z_2)$. Hence the set of all homology classes $[Z]\in H_\ast(M_1;\Z_2)$ 
(where $Z\in \W_C$ and $C$ is a critical submanifold of $f$ satisfying $f(C)<t_1$) forms a basis. This proves our inductive statement for $i=1$. 

Assume that the inductive statement is true for $i$ and consider its validity for $i+1$. We know that the classes $[Z]\in H_\ast(M_i;\Z_2)$ for all $Z\in \W_C$ and for all $C$ with 
$f(C)<t_i$ form a basis of $H_\ast(M_i;\Z_2)$. In view of isomorphism (\ref{isorel}) it is enough to show that the set of relative homology classes 
$[(Z, Z\cap M_i)]\in H_\ast(M_{i+1}, M_i;\Z_2)$ is a basis of $H_\ast(M_{i+1}, M_i;\Z_2)$, where $Z\in \W_C$ and $C$ runs over all critical submanifolds $C$ of $f$ with $f(C)\in (t_i, t_{i+1})$. Using a combination of excision, deformation retraction and the Thom isomorphism one obtains an isomorphism 
$$H_\ast(M_{i+1}, M_i;\Z_2) \simeq \bigoplus_{f(C)\in (t_i, t_{i+1})} H_{\ast-\ind_f(C)}(C;\Z_2)$$
and, as is well-known, under this isomorphism the image of the homology class $[(Z, Z\cap M_i)]$ equals $[Z\cap C]\in H_{\ast - \ind_f}(C;\Z_2)$. Thus we see that our assumptions imply that the relative homology classes $[(Z, Z\cap M_i)]\in H_\ast(M_{i+1}, M_i;\Z_2)$ form a basis of $H_\ast(M_{i+1}, M_i;\Z_2)$ confirming the induction step.  

This completes the proof. 
\end{proof}

Next we give a version of Proposition \ref{prop1} with integral coefficients.

\begin{proposition}\label{prop2} Suppose that additionally to the assumptions of Proposition \ref{prop1}, each of the submanifolds $Z\in \W_C$ is oriented. 
Fix an orientation of the normal bundle to $C$ in $W_C$. Then each intersection $Z\cap C$ is canonically oriented and the symbol $[Z\cap C]\in H_\ast(C;\Z)$ will denote the homology class of $C$ realized by $Z\cap C$. Assume that for each critical submanifold $C\subset {\rm {Crit}}(f)$ the collection of classes $[Z\cap C]\in H_\ast(C;\Z)$,
where $Z\in \W_C$, forms a basis of $H_\ast(C;\Z)$. Then the collection of the homology classes $[Z]\in H_\ast(M;\Z)$, for all $Z\in \W_C$ and for all critical submanifolds $C\subset  {\rm {Crit}}(f)$, forms a basis of $H_\ast(M;\Z)$. 
\end{proposition}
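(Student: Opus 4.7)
The plan is to run the induction of Proposition \ref{prop1} verbatim, replacing $\Z_2$ with $\Z$, with the only additional input being orientation data for the Thom isomorphism. First I would confirm that everything needed for integral Thom isomorphism is available: because $C$ is $k$-lacunary with $k \geq 2$, the group $H_1(C;\Z)$ vanishes, so $H^1(C;\Z_2) = 0$ by universal coefficients, hence the first Stiefel--Whitney class of the negative normal bundle of $C$ vanishes and this bundle is orientable over $\Z$. Since $\dim W_C = \dim C + \ind_f(C)$ and $f|W_C$ attains its maximum along $C$, the normal bundle of $C$ in $W_C$ coincides with the negative normal bundle of $C$ in $M$, so the orientation chosen in the hypothesis determines an integral Thom isomorphism.

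Keeping the notation $t_0 < t_1 < \cdots < t_k$ and $M_i = f^{-1}(-\infty, t_i]$ from the proof of Proposition \ref{Lac}, the inductive statement becomes: for each $i$, the set of integral homology classes $[Z] \in H_\ast(M_i;\Z)$ with $Z \in \W_C$ and $f(C) < t_i$ is a $\Z$-basis of $H_\ast(M_i;\Z)$. The base case $i = 1$ is identical to that of Proposition \ref{prop1}: $M_1$ deformation retracts onto the disjoint union of the minima, and on each such $C$ we have $W_C = C$, so the $[Z] \in \W_C$ form a basis of $H_\ast(C;\Z)$ by assumption.

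For the inductive step, Proposition \ref{Lac} yields an integral splitting
\begin{equation*}
H_\ast(M_{i+1};\Z) \simeq H_\ast(M_i;\Z) \oplus \bigoplus_{f(C) \in (t_i,t_{i+1})} H_{\ast-\ind_f(C)}(C;\Z),
\end{equation*}
and excision together with the (now integral) Thom isomorphism gives
\begin{equation*}
H_\ast(M_{i+1},M_i;\Z) \simeq \bigoplus_{f(C) \in (t_i,t_{i+1})} H_{\ast-\ind_f(C)}(C;\Z).
\end{equation*}
Thus it suffices to show that the relative classes $[(Z, Z \cap M_i)]$, for $Z \in \W_C$ and $C$ with $f(C) \in (t_i,t_{i+1})$, map to the basis $\{[Z \cap C]\}$ on the right.

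The main (and only genuinely new) point is to verify the sign: the image of $[(Z, Z \cap M_i)]$ under the Thom isomorphism equals the integrally oriented class $[Z \cap C] \in H_{\ast-\ind_f(C)}(C;\Z)$ defined in the statement. I would argue this by a standard local computation near $C$. By transversality of $Z$ to $C$ inside $W_C$, the restriction of the normal bundle $N(C, W_C)$ to $Z \cap C$ is a subbundle of $TZ|_{Z\cap C}$ complementary to $T(Z \cap C)$, so the orientation of $Z$ combined with the chosen orientation of $N(C, W_C)$ induces precisely the canonical orientation of $Z \cap C$; and capping the Thom class of the negative bundle with $[Z]$ computes exactly this oriented intersection. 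Given this identification and the hypothesis that $\{[Z \cap C]\}$ is a $\Z$-basis of $H_\ast(C;\Z)$ for each $C$, the inductive step closes and the proposition follows. The obstacle is purely bookkeeping of signs in the local model; no further geometric input beyond Proposition \ref{prop1} is required.
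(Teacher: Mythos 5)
Your proposal is correct and is essentially the argument the paper has in mind: the authors omit the proof, stating only that it is ``similar to that of Proposition \ref{prop1}'', and your write-up carries out exactly that induction with $\Z$ coefficients, supplying the two points the integral case needs --- orientability of the negative bundle (already established inside the proof of Proposition \ref{Lac} via $H^1(C;\Z_2)=0$) and the identification of the image of $[(Z,Z\cap M_i)]$ under the oriented Thom isomorphism with the canonically oriented class $[Z\cap C]$. No discrepancy with the paper's approach.
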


The proof of Proposition \ref{prop2} is similar to that of Proposition \ref{prop1} and is omitted.

\section{Basic facts about the spaces of polygons $E_d(\ell)$}

We collect in this section some basic facts about $E_d(\ell)$. 
Versions of these statements for other closely related spaces are well-known and are provable by similar arguments.  

First we show that for a generic length vector $\ell\in \R^n_>$ the variety $E_d(\ell)$ is a closed smooth manifold of dimension $d(n-1) -n$. 

Consider the map $F: (\R^{d})^{n-1} \rightarrow \R^{n}$ given by the formula
$$(v_{1}, \dots, v_{n-1}) \mapsto (|v_{1}|, |v_{2}-v_{1}|, \dots, |v_{n-1}-v_{n-2}|, |v_{n-1}|).$$
One clearly has $E_{d}(\ell)=F^{-1}(\ell)$ as the following picture explains. 
\begin{figure}[h]
\begin{center}
\resizebox{7cm}{5cm}{\includegraphics[68,431][429,718]{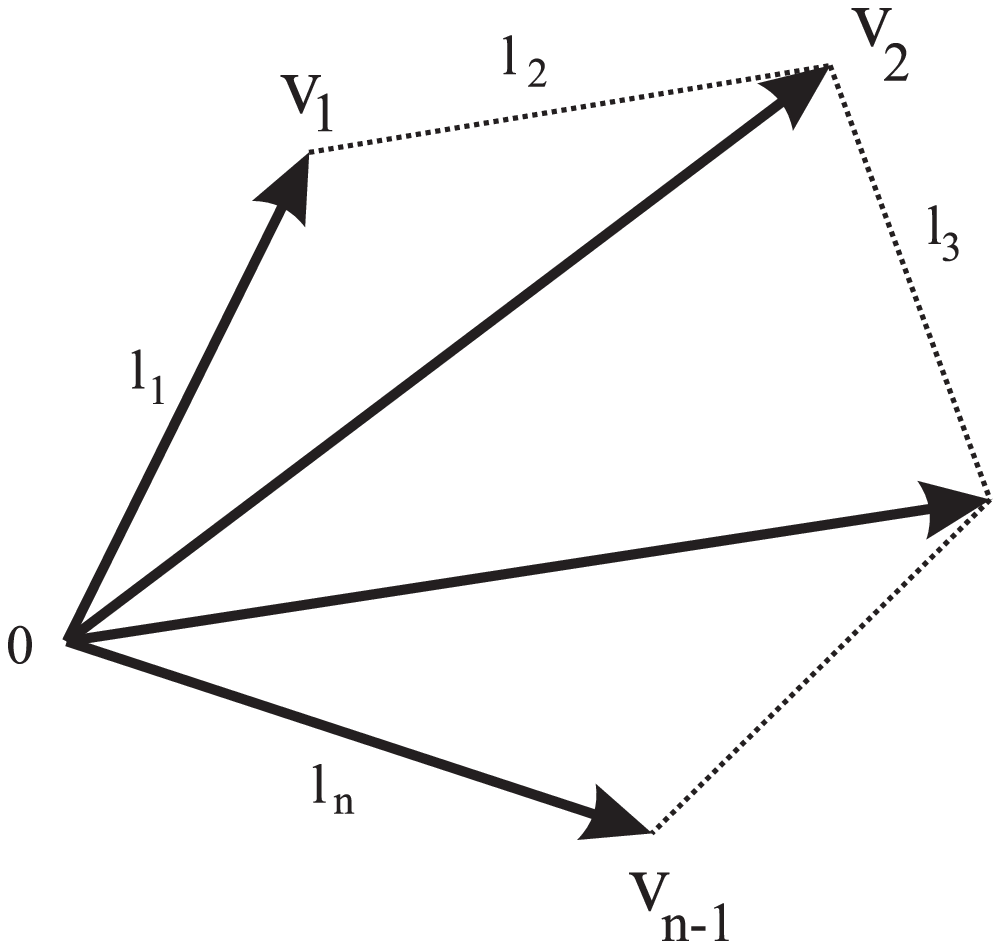}}
\end{center}
\end{figure}
The map $F$ is smooth on the open subset $\Omega\subset (\R^d)^{n-1}$ given by the inequalities $v_1\not=0$, $v_j\not= v_{j+1}$ for $j=1, \dots, n-2$ and $v_{n-1}\not=0$. 
Clearly $E_d(\ell)$ is contained in $\Omega= F^{-1}(\R^n_>)$.

\begin{proposition}\label{gen}
A vector $\ell\in \R^n_>$ is a regular value of $F|\Omega$ if and only if $\ell$ is generic, i.e. $\sum_{i=1}^n \epsilon_il_i\not=0$ for $\epsilon_i=\pm 1$. 
Thus, for a generic $\ell\in \R^d_>$ 
the preimage $F^{-1}(\ell) =E_d(\ell)$ is a smooth closed manifold of dimension $d(n-1) -n$. 
\end{proposition}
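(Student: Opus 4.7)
The plan is to compute the differential of $F$ on $\Omega$ at a point of $F^{-1}(\ell)$ and identify the precise geometric condition under which $dF$ fails to be surjective; then show this condition is equivalent to the existence of signs $\epsilon_i=\pm 1$ with $\sum \epsilon_i l_i=0$. Write $u_1=v_1/|v_1|$, $u_k=(v_k-v_{k-1})/|v_k-v_{k-1}|$ for $2\le k\le n-1$, and $u_n=-v_{n-1}/|v_{n-1}|$. Then for a tangent vector $(w_1,\dots,w_{n-1})\in(\R^d)^{n-1}$, a direct computation of the gradients of the component functions gives
\[
dF_1(w)=\langle u_1,w_1\rangle,\quad dF_k(w)=\langle u_k,w_k-w_{k-1}\rangle\ (2\le k\le n-1),\quad dF_n(w)=-\langle u_n,w_{n-1}\rangle.
\]

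Next I would analyze surjectivity of $dF$ by duality: $dF$ is not surjective iff there exists a nonzero $\lambda=(\lambda_1,\dots,\lambda_n)\in\R^n$ with $\sum_k \lambda_k\, dF_k=0$. Collecting the coefficients of each $w_j$ yields the chain of equalities
\[
\lambda_1 u_1=\lambda_2 u_2=\cdots=\lambda_{n-1}u_{n-1}=\lambda_n u_n.
\]
Because each $u_k$ is a unit vector and the equations propagate, either all $\lambda_k$ vanish or all are nonzero; in the latter case every $u_k$ equals $\epsilon_k u_1$ for some $\epsilon_k=\pm 1$, with $\epsilon_1=1$. Hence $dF$ fails to be surjective at a point of $F^{-1}(\ell)$ exactly when the corresponding configuration is collinear, i.e.\ all $u_k$ lie in a common line. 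Using $\sum_k l_k u_k=0$ this is equivalent to $\sum_k \epsilon_k l_k=0$ for some choice of signs. Conversely, given such signs, one may fix any unit vector $u\in S^{d-1}$, set $u_k=\epsilon_k u$ and $v_k=\sum_{j\le k} l_j u_j$; the conditions $l_k>0$ force $v_k\neq v_{k-1}$, $v_1\neq 0$, and $v_{n-1}=-l_n u_n\neq 0$, so the point lies in $\Omega$ and realizes $F=\ell$. This proves the regular-value characterization.

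To finish, I would invoke the regular value theorem on $\Omega$: for generic $\ell$, $F^{-1}(\ell)=E_d(\ell)$ is a smooth submanifold of $\Omega\subset(\R^d)^{n-1}$ of codimension $n$, hence of dimension $d(n-1)-n$. Closedness follows since $E_d(\ell)$ is identified (via the map $(v_1,\dots,v_{n-1})\mapsto(u_1,\dots,u_n)$ above) with the closed subset $\{(u_1,\dots,u_n)\in(S^{d-1})^n:\sum l_k u_k=0\}$ of the compact manifold $(S^{d-1})^n$.

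The main obstacle I anticipate is purely bookkeeping: correctly setting up the sign conventions so that the geometric correspondence between points of $F^{-1}(\ell)\subset\Omega$ and polygonal configurations $(u_1,\dots,u_n)\in E_d(\ell)$ is consistent, in particular the sign in $u_n=-v_{n-1}/|v_{n-1}|$ that ensures the closure condition $\sum l_k u_k=0$. Once this is in place, the linear-algebra extraction of the collinearity condition from $\ker(dF^\ast)$ and its translation into the generic condition on $\ell$ is straightforward.
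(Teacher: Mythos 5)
Your proposal is correct and follows essentially the same route as the paper: compute the differential of $F$ componentwise, characterize non-surjectivity by the existence of a nonzero covector $\lambda$ with $\lambda_j u_j=\lambda_{j+1}u_{j+1}$, deduce collinearity, and translate this into the condition $\sum\epsilon_i l_i=0$. Your explicit converse construction (realizing any sign relation by a collinear configuration in $\Omega$) fills in a step the paper treats as obvious, which is a welcome addition rather than a deviation.
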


\begin{proof} It is obvious that 
Proposition \ref{gen} follows from the statement that critical points of the map $F|\Omega$ are the tuples $v=(v_{1}, \dots, v_{n-1})\- \in \Omega$ such that 
the vectors $v_{1}, \dots, v_{n-1}$ $\in \R^{d}$ are collinear.


Let $V=(V_{1}, \dots, V_{n-1}) \in T_{v}(\Omega)$ be a tangent vector. 
%
We denote by $\langle \cdot, \cdot \rangle$ the standard scalar product on $\R^{d}$. 
The derivative $D_{V}F$ of $F$ in direction $V$ is a vector of the form $((D_VF)_1, \dots, (D_VF)_n)\in \R^n$ where
$$(D_{V}F)_{1}=\langle V_{1},\frac{v_{1}}{|v_1|}\rangle, \quad (D_{V}F)_{n}=\langle V_{n-1},\frac{v_{n-1}}{|v_{n-1}|}\rangle$$ 
and for $j=2, \dots, n-1$ one has
$$(D_{V}F)_{j}=  \langle V_{j}-V_{j-1}, \frac{v_{j}-v_{j-1}}{|v_j-v_{j-1}|} \rangle.$$

Consider the unit vectors $u_1, \dots, u_n \in S^{d-1}$ given by
 $$u_{1}=\frac{v_1}{|v_1|}, \text{ } u_{n}=-\frac{v_{n-1}}{|v_{n-1}|}$$ 
and 
$$u_{j}=\frac{v_{j}-v_{j-1}}{|v_{j}-v_{j-1}|}$$
for $j=2, \dots, n-1$. 
Then for any $j=1, \dots, n$ we may write $(D_VF)_j = \langle V_j-V_{j-1}, u_j\rangle$ where we understand $V_0=0=V_n$.

The point $v$ is a critical point of $F$ iff the differential of $F$ at $v$ has rank less than $n$. 
This is equivalent to the existence of a nonzero vector $a=(a_{1}, \dots, a_{n}) \in \R^{n},$ so that the image of the derivative of $F$ at $v$ is contained in the hyperplane orthogonal to $a$. 
We conclude that
for all $V_{1}, \dots, V_{n-1}$ one would have
$$\sum \limits_{j=1}^{n-1} \langle V_{j},a_{j}u_{j}-a_{j+1}u_{j+1} \rangle=0$$ (again, understanding that $V_0=0=V_n$.)
This happens iff the unit vectors $u_{1}, \dots, u_{n}$ satisfy $a_{j}u_{j}=a_{j+1}u_{j+1}$ for $j=1, \dots, n-1$. 
Taking the absolute value we find that $|a_i|=|a_{i+1}|\not=0$ and hence
the vectors $u_{1}, \dots, u_{n}$, and therefore also $v_{1}, \dots, v_{n-1}$, are collinear. 
\end{proof}

\begin{proposition}\label{prop32} If two generic length vectors $\ell, \ell'\in \R^n_>$ lie in the same chamber then the manifolds $E_d(\ell)$ and $E_d(\ell')$ are ${\rm O}(d)$-equivariantly diffeomorphic.
\end{proposition}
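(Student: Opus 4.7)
My plan is to realize the diffeomorphism by applying Ehresmann's fibration theorem to a one-parameter family of polygon spaces. Since the set of length vectors lying in a common chamber is convex, the segment $\ell_t = (1-t)\ell + t\ell'$ stays entirely in that chamber, so every $\ell_t$ is generic. I would then work with the total space
\[
\mathcal{E} = \Phi^{-1}(0) \subset (S^{d-1})^n \times [0,1], \qquad \Phi(u_1,\dots,u_n,t) = \sum_{k=1}^n \ell_k(t)\, u_k,
\]
whose two boundary fibers over $\{0\}$ and $\{1\}$ are exactly $E_d(\ell)$ and $E_d(\ell')$.

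The first task is to check that $0$ is a regular value of $\Phi$. The partial differential of $\Phi$ in the $u$-directions alone sends $(V_1,\dots,V_n) \in \bigoplus_k T_{u_k}S^{d-1}$ to $\sum_k \ell_k(t) V_k \in \R^d$; failure of surjectivity would produce a nonzero $w \in \R^d$ orthogonal to every admissible $V_k$, forcing $u_k = \pm w/|w|$ for each $k$ and hence $\sum_k \epsilon_k \ell_k(t) = 0$ with $\epsilon_k \in \{\pm 1\}$, contradicting the genericity of $\ell_t$. This exhibits $\mathcal{E}$ as a smooth compact manifold with boundary and, by the same surjectivity, shows that $\pi:\mathcal{E}\to[0,1]$ is a submersion: a lift of $\partial_t$ is obtained by solving $\sum \ell_k(t) V_k = -\sum (\ell'_k - \ell_k) u_k$ with $V_k \perp u_k$. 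Ehresmann's theorem then produces a smooth trivialization of $\pi$.

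To upgrade this to an ${\rm O}(d)$-equivariant diffeomorphism I would average. The diagonal action of ${\rm O}(d)$ on $(S^{d-1})^n \times [0,1]$, trivial in the $t$-variable, commutes with $\Phi$ because $\Phi(gu,t) = g\cdot\Phi(u,t)$, hence preserves $\mathcal{E}$ and the fibers of $\pi$. Pick any smooth vector field $X$ on $\mathcal{E}$ with $d\pi(X) = \partial_t$ and set
\[
\bar{X}_p \;=\; \int_{{\rm O}(d)} (g^{-1})_* X_{g\cdot p}\, dg
\]
using normalized Haar measure. Each integrand still projects to $\partial_t$ (since $g$ fixes $t$), so $\bar X$ projects to $\partial_t$, and $\bar X$ is ${\rm O}(d)$-invariant by construction. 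Its time-one flow therefore carries the fiber $E_d(\ell)$ over $0$ diffeomorphically, and ${\rm O}(d)$-equivariantly, onto the fiber $E_d(\ell')$ over $1$.

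The only step that requires genuine computation is the regular-value check; but this is a direct repetition of the critical-point analysis already performed in Proposition \ref{gen}. Everything afterwards --- Ehresmann's theorem and the Haar-measure averaging --- is entirely standard once the transversality is in hand.
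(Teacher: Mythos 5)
Your proposal is correct and follows essentially the same route as the paper: both build the one-parameter family of polygon spaces over the segment $(1-t)\ell+t\ell'$ (which stays generic by convexity of the chamber), verify it is a compact manifold submersing onto the interval, and flow along an ${\rm O}(d)$-invariant lift of the interval's coordinate vector field to obtain the equivariant diffeomorphism. The only differences are cosmetic: the paper realizes the cobordism as $F^{-1}(I)\subset(\R^d)^{n-1}$ and takes the gradient of the projection with respect to the induced (invariant) Euclidean metric, whereas you work in $(S^{d-1})^n\times[0,1]$ and obtain the invariant lift by Haar averaging.
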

\begin{proof}
Suppose that $\ell$ and $\ell'$ lie in the same chamber. Then for $t\in [0,1]$ the vector $\ell^t=(1-t)\ell+t\ell'$ is generic. 
Then, applying the previous Proposition, we see that $F$ is a submersion on the interval $I=\{\ell^t; t\in [0,1]\}$ and hence $E(\ell, \ell') = F^{-1}(I)$ is a smooth manifold with boundary $E_d(\ell) \sqcup E_d(\ell')$. This cobordism is trivial since the projection $E(\ell, \ell') \to I$ is a smooth function with no critical points. Therefore the boundary components
$E_d(\ell)$ and $E_d(\ell')$ are diffeomorphic. 
The map $F$ is invariant with respect to the diagonal action of the orthogonal group ${\rm O}(d)$ on $(\R^{d})^{n-1}$. We conclude that
the projection $\pi: E(\ell,\ell') \rightarrow I$ is ${\rm O}(d)$-invariant. Consider the flow of the gradient of $\pi$ with respect to the metric
on $E(\ell,\ell')$ induced by the Euclidean metric of $(\R^{d})^{n-1}$. The flow defines a diffeomorphism $E_{d}(\ell) \simeq E_{d}(\ell')$. Since both
$\pi$ and the Euclidean metric are ${\rm O}(d)$-invariant, so is this diffeomorphism. 
\end{proof}

\section{Cohomology of $E_d(\ell)$}

The crucial step in the proof of Theorem \ref{Wal} is a computation of the subring $H^{(d-1)*}(E_{d}(\ell);\Z_{2}) \subset H^{*}(E_{d}(\ell);\Z_{2})$ which is performed in this section. We always suppose 
that $d>2$; in the case $d=2$ the main results are either known or follow easily from the known results. Besides, the techniques used in the proofs below
work only under the assumption $d>2$, while in the case $d=2$ different techniques can be applied.
It will be convenient to assume that the length vector $\ell=(l_1, \dots, l_n)$ is {\it ordered}, i.e. $$l_{1} \leq l_2\le \dots \leq l_{n}.$$ 
Since the diffeomorphism type of the space $E_d(\ell)$ does not change if one permutes the coordinates of $\ell$, this assumption is not restrictive: any length vector becomes ordered after a permutation.

\begin{definition} 
We say that a subset $J \subset \{1, \dots, n\}$ is {\it long} with respect to a length vector $\ell\in \R^n_>$ if $\sum \limits_{j \in J} l_{j}-\sum \limits_{j \notin J}l_{j}>0.$ 

A subset $J \subset \{1, \dots, n\}$ is {\it short} with respect to $\ell$ if $\sum \limits_{j \in J} l_{j}-\sum \limits_{j \notin J}l_{j}<0.$

A subset $J \subset \{1, \dots, n\}$ is {\it median} with respect to $\ell$ if $\sum \limits_{j \in J} l_{j}=\sum \limits_{j \notin J}l_{j}.$
\end{definition}

\begin{proposition} \label{ECohomology}
For any ordered length vector $\ell\in \R^n_>$ and $d>2$ there is an isomorphism of graded rings $$H^{(d-1)*}(E_{d}(\ell);\Z_{2}) \simeq \Lambda_{d}(Z_{1}, \dots, Z_{n})/I,$$ 
where $\Lambda_{d}(Z_{1}, \dots, Z_{n})$ is the exterior algebra with coefficients in $\Z_2$ on generators $Z_1, \dots, Z_n$ having degree $d-1$ and 
$I \subset \Lambda_d(Z_{1}, \dots, Z_{n})$ is the ideal generated (as an ideal) by the monomilas $Z_{j_{1}} \dots Z_{j_{k}}$ such that $1\le j_1<j_2< \dots <j_k<n$ and the 
set $\{j_1, j_2, \dots, j_k, n\}$ is long with respect to the length vector $\ell$. 
\end{proposition}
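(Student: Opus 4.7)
The plan is to combine the Morse--Bott lacunary principle of Section~\ref{sec:lac} with an analysis of the natural fibration $p\colon E_d(\ell)\to S^{d-1}$, $p(u)=u_n$. I fix a unit vector $e\in S^{d-1}$ and take the height function $f\colon E_d(\ell)\to\R$, $f(u)=-\langle u_n,e\rangle$. An argument similar to Proposition~\ref{gen}, applied to chain configurations with a prescribed endpoint, shows that for generic $\ell$ the map $p$ is a submersion, so the critical submanifolds of $f$ are precisely $C_\pm=p^{-1}(\pm e)$, each diffeomorphic to the chain space $\mathcal C_d^n(\ell)$, with Morse--Bott indices $0$ and $d-1$, both divisible by $k:=d-1$.

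The first main step will be to establish that $\mathcal C_d^n(\ell)$ is $(d-1)$-lacunary with cohomology ring of the expected exterior-algebra form: an exterior algebra on degree $(d-1)$ generators $R_1,\dots,R_{n-1}$ modulo the ideal of monomials $R_{j_1}\cdots R_{j_k}$ with $\{j_1,\dots,j_k,n\}$ long with respect to $\ell$. I plan to obtain this by applying the same Morse-theoretic approach to the chain space itself (combined with induction on $n$), the output being closely related to the computation in \cite{FHS}. Removing the ``dominated'' hypothesis present in \cite{FHS} is the principal technical obstacle of the whole argument, and it is precisely where the flexibility of Proposition~\ref{Lac} becomes essential: its hypotheses can be verified for the iterated height functions even in the non-dominated case, avoiding the classical Morse-perfectness obstruction.

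Once the chain space cohomology is in hand, Proposition~\ref{Lac} gives the additive decomposition $H^*(E_d(\ell);\Z_2)\cong H^*(\mathcal C_d^n(\ell);\Z_2)\oplus H^{*-(d-1)}(\mathcal C_d^n(\ell);\Z_2)$, concentrated in degrees that are multiples of $d-1$. For the ring generators I define $Z_i=\mathrm{PD}[W_i]\in H^{d-1}(E_d(\ell);\Z_2)$ for $i<n$, where $W_i=\{u\in E_d(\ell): u_i=u_n\}$ is diffeomorphic to the polygon space $E_d(\ell^{(i)})$ obtained by merging sides $i$ and $n$, together with $Z_n=\mathrm{PD}[C_+]=p^{\ast}t$, where $t$ generates $H^{d-1}(S^{d-1};\Z_2)$. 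More generally, $\bigcap_{j\in J}W_j$ is a transverse intersection identified with the polygon space $E_d(\ell^{(J)})$ whose merged side has length $\sum_{k\in J\cup\{n\}}l_k$; by the polygon inequality and the genericity of $\ell$, this intersection is nonempty iff $\{j_1,\dots,j_k,n\}$ is short, hence $Z_{j_1}\cdots Z_{j_k}=0$ whenever $\{j_1,\dots,j_k,n\}$ is long. The relation $Z_i^2=0$ then follows because the normal bundle of $W_i$ in $E_d(\ell)$ is the pullback of $TS^{d-1}$ along $u\mapsto u_n$, whose mod-$2$ Euler class vanishes (since $\chi(S^{d-1})\equiv 0\pmod 2$), and similarly $Z_n^2=0$ since $t^2=0$.

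To finish the argument I will apply Proposition~\ref{prop1} with $W_{C_+}=C_+$ and $W_{C_-}=E_d(\ell)$, letting the families $\W_{C_\pm}$ consist of the submanifolds $\bigcap_{j\in J}W_j$ (intersected with $C_+$ in the first case) as $J\subset\{1,\dots,n-1\}$ ranges over subsets with $J\cup\{n\}$ short. By construction, the restrictions of these submanifolds to $C_\pm$ realize the basis of $H_*(\mathcal C_d^n(\ell);\Z_2)$ from the first step, so Proposition~\ref{prop1} together with Poincaré duality will yield that $\{Z_J\}\cup\{Z_J\cdot Z_n\}$, indexed by subsets $J\subset\{1,\dots,n-1\}$ with $J\cup\{n\}$ short, is a basis of $H^{(d-1)*}(E_d(\ell);\Z_2)$. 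This matches exactly the basis of $\Lambda_d(Z_1,\dots,Z_n)/I$ and, combined with the multiplicative relations above, completes the ring isomorphism.
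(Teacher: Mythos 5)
Your plan breaks at its very first step, and not merely because the chain‑space computation is ``deferred'': the critical submanifolds $C_\pm$ of your height function $-\langle u_n,e\rangle$ are copies of the chain space, which is a nonempty closed manifold of dimension $\dim E_d(\ell)-(d-1)=(n-2)(d-1)-1$. For $d>2$ this dimension is \emph{never} divisible by $d-1$, so the top homology of $C_\pm$ already violates Definition \ref{lacunary}, and Propositions \ref{Lac} and \ref{prop1} simply do not apply to your $f$. Concretely, for $d=3$ and $\ell=(1,1,1.5)$ one has $E_3(\ell)\cong SO(3)\cong \R P^3$ and $C_\pm\cong S^1$; the asserted conclusion that $H_*(E_d(\ell))$ is ``concentrated in degrees that are multiples of $d-1$'' is false here ($H_1$ and $H_3$ are nonzero), and indeed it can never hold, since the fundamental class of $E_d(\ell)$ lives in degree $(n-1)(d-1)-1\equiv -1\pmod{d-1}$ (compare Proposition \ref{EBettiNumbers}, which exhibits the homology in degrees $(d-1)k-1$). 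So the route through the fibration $p\colon E_d(\ell)\to S^{d-1}$ cannot be rescued by the lacunary principle; this is precisely why the ``first main step'' you postpone (the ring of $\mathcal C_d^n(\ell)$ without the dominated hypothesis) is not a technicality but an impossibility in the form you need, and why the paper states that the dominated hypothesis of \cite{FHS} is avoided by working with polygon spaces rather than chain spaces.

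The paper's proof is structured to sidestep exactly this: it does Morse--Bott theory not on $E_d(\ell)$ but on the ambient manifold $W=(S^{d-1})^n$ with $f_\ell(u)=-\left|\sum l_ju_j\right|^2$, whose critical submanifolds away from $E_d(\ell)=f_\ell^{-1}(0)$ are the spheres $P_J$ (genuinely $(d-1)$-lacunary, with indices $(d-1)(n-|J|)$). It then computes $H_*(W-E_d(\ell);\Z_2)$ via Propositions \ref{Lac}--\ref{prop1}, analyses the inclusion‑induced map $j_k$ into $H_*(W;\Z_2)$, and recovers $H^{(d-1)*}(E_d(\ell);\Z_2)$ as a quotient of the exterior algebra $H^*(W;\Z_2)$ through the exact sequence of the pair $(W,E_d(\ell))$ and Poincar\'e--Lefschetz duality in $W$ — never using Poincar\'e duality on $E_d(\ell)$ itself. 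This also explains why Proposition \ref{ECohomology} holds for arbitrary (not necessarily generic) ordered $\ell$, whereas your argument needs $E_d(\ell)$ to be a smooth manifold and $p$ a submersion. Your later observations (the submanifolds $\{u_i=u_n\}$, the vanishing of products over long sets via empty intersections, $Z_i^2=0$ via the mod‑$2$ Euler class) do echo the intersection‑theoretic part of the paper's argument (Lemma \ref{IntLemma} and the change of basis $Z_j=X_j+X_n$), but they cannot be assembled into a proof without first obtaining the additive structure by a method that does not rely on lacunarity of the fibers.
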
 

We emphasize that in Proposition \ref{ECohomology} we do not assume that $\ell$ is generic. 

The proof of Proposition \ref{ECohomology} will use a series of Lemmas given below. 

Denote $W=(S^{d-1})^{n}$ and consider the function $f_{\ell}: W \rightarrow \R$ given by 
\begin{eqnarray}\label{dif}
f_{\ell}\colon (u_{1}, \dots, u_{n}) \mapsto -\left|\sum \limits_{j=1}^{n}l_{j}u_{j}\right|^{2}, \quad u_j\in S^{d-1}.\end{eqnarray} 
Note that $E_{d}(\ell)=f_{\ell}^{-1}(0)$ is a critical submanifold (the set of points where $f_\ell$ achieves its maximum). Let us compute the remaining critical points of 
$f_{\ell}$.
For a subset $J \subset \{1, \dots n\}$, let $P_{J} \subset W$ denote the submanifold 
$$P_{J}=\{(u_{1}, \dots, u_{n}): u_{i}=u_{j}=-u_{k} \text{ for all } i,j \in J, k \notin J\}$$ of $W$. Note that each $P_{J}$ is diffeomorphic to the sphere $S^{d-1}$ and $P_J=P_{\bar J}$ for the complement $\bar J$ of $J$.
 
\begin{lemma}\label{CritPoints}
The restriction of $f_{\ell}$ to $W-E_{d}(\ell)$ is non-degenerate in the sense of Bott. 
The set of critical points of $f_\ell$ in $W-E_{d}(\ell)$
is the union of all submanifolds $P_{J}$ where $J$ is a long subset with respect to $\ell$. 
The Morse-Bott index of the critical submanifold $P_{J}$ equals
$$\ind_{f_{\ell}}(P_{J})= (d-1)(n-|J|),$$
where $|J|$ denotes the cardinality of $J$. 
\end{lemma}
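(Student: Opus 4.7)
My plan is to reduce everything to a Lagrange-multiplier calculation followed by a short analysis of a quadratic form on $\R^n$.

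\textbf{Step 1: locating the critical points.} Setting $a(u) = \sum_{j=1}^n l_j u_j$, one has $f_\ell = -|a|^2$ on $W = (S^{d-1})^n$. I would differentiate along a curve $u_j(t)$ on each factor with $\dot u_j(0) = V_j \perp u_j$ to obtain
$$ \frac{1}{2}\,df_\ell(V) \;=\; -\sum_{j=1}^n l_j\,\langle a,\,V_j\rangle .$$
A point is critical iff this vanishes for every admissible $V$, i.e.\ iff $l_j a \perp T_{u_j}S^{d-1}$ for each $j$. Outside $E_d(\ell) = \{a=0\}$ this forces $u_j = \pm a/|a|$. Writing $e = a/|a|$, $\epsilon_j = \pm 1$ and $J = \{j:\epsilon_j = 1\}$, the equation $a = |a|e$ gives $|a| = \sum_j \epsilon_j l_j = \sum_J l_j - \sum_{\bar J} l_j = L_J > 0$, which is the condition that $J$ be long. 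Conversely any long $J$ yields the critical set $P_J$.

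\textbf{Step 2: the Hessian as a quadratic form.} Fix a critical point $p\in P_J$ with direction $e$. Parametrize nearby $u_j$ by tangent vectors $w_j\in e^\perp \cong \R^{d-1}$ via
$$ u_j = \epsilon_j e\sqrt{1-|w_j|^2} + w_j .$$
A direct expansion gives
$$ |a|^2 \;=\; L_J^2 \;-\; L_J \sum_{j=1}^n \epsilon_j l_j |w_j|^2 \;+\; \Bigl|\sum_{j=1}^n l_j w_j\Bigr|^2 \;+\; O(|w|^4) .$$
Hence, up to sign, the Hessian of $f_\ell$ at $p$ is, on the $n(d-1)$-dimensional tangent space, the quadratic form
$$ Q(w) \;=\; L_J \sum_{j=1}^n \epsilon_j l_j |w_j|^2 \;-\; \Bigl|\sum_{j=1}^n l_j w_j\Bigr|^2 .$$
Choosing an orthonormal basis of $e^\perp$ splits $Q$ as an orthogonal sum of $d-1$ copies of the form
$$ q(x) \;=\; L_J \sum_{j=1}^n \epsilon_j l_j x_j^2 \;-\; \Bigl(\sum_{j=1}^n l_j x_j\Bigr)^2 \quad\text{on } \R^n ,$$
so it suffices to determine the signature of $q$.

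\textbf{Step 3: signature of $q$.} I would exhibit three explicit subspaces. First, on $U^- = \{x : x_j = 0 \text{ for } j\in J\}$ one has $q(x) = -L_J\sum_{j\notin J} l_j x_j^2 - (\sum_{j\notin J} l_j x_j)^2$, which is negative definite; this gives $\dim U^- = n-|J|$ negative eigenvalues. Second, on $U^+ = \{x : x_j = 0\text{ for }j\notin J,\ \sum_{j\in J} l_j x_j = 0\}$ the cross term drops out and $q(x) = L_J\sum_{j\in J} l_j x_j^2 > 0$, giving $\dim U^+ = |J|-1$ positive eigenvalues. Third, the vector $\xi = (\epsilon_1,\dots,\epsilon_n)$ satisfies $q(\xi) = L_J\sum_j \epsilon_j^2 l_j - L_J^2 = 0$, a one-dimensional nullspace direction. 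Since $(n-|J|) + (|J|-1) + 1 = n$, these account for everything and the signature of $q$ is exactly $(|J|-1,\, n-|J|,\, 1)$.

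\textbf{Step 4: conclusion.} The nullspace of $Q$ therefore has dimension $d-1$, and it is straightforward to check that the null directions $w_j = \epsilon_j v$ ($v\in e^\perp$) are exactly the tangent directions to $P_J$ obtained by varying $e$ on $S^{d-1}$. Thus $P_J$ is a non-degenerate critical submanifold of $f_\ell$ in the sense of Bott, and its Morse--Bott index is $(d-1)$ times the negative index of $q$, namely $(d-1)(n-|J|)$, as claimed. The main technical point — and essentially the only one that requires thought — is the signature analysis in Step~3; everything else is linear algebra once the Hessian is written down in the form $Q$.
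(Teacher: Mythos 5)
Your argument follows essentially the same route as the paper's: the first-variation computation locating the critical set as $\bigcup_J P_J$ over long $J$ is identical, and your quadratic form $q$ is exactly the $n\times n$ form the paper obtains in local coordinates; the only real difference is that the paper outsources the signature of that form to Lemma 1.4 of \cite{Fa5}, whereas you compute it by exhibiting definite subspaces, which is a nice self-contained alternative. There is, however, one inference in your Step 3 that does not follow as written. From $U^+$ (positive definite, dimension $|J|-1$) and $U^-$ (negative definite, dimension $n-|J|$) you may conclude that the nullity $z$ of $q$ satisfies $z\le 1$; but the single identity $q(\xi)=0$ does \emph{not} give $z\ge 1$, because an isotropic vector need not lie in the radical of the form. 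For instance, for $x^2+y^2-z^2$ with $U^+=\langle e_1\rangle$, $U^-=\langle e_3\rangle$ and $\xi=e_2+e_3$ all of your hypotheses hold (definite subspaces of the right dimensions, $\xi$ isotropic and transverse to $U^+\oplus U^-$, dimensions summing to the total), yet the form is nondegenerate. The fix is one line: check that $\xi$ kills the associated bilinear form, namely $b(\xi,y)=L_J\sum_j\epsilon_j l_j\epsilon_j y_j-\bigl(\sum_j l_j\epsilon_j\bigr)\bigl(\sum_j l_j y_j\bigr)=L_J\sum_j l_j y_j-L_J\sum_j l_j y_j=0$ for every $y$, so $\xi$ genuinely spans the radical. (Alternatively, $z\ge 1$ is forced by the observation that $T_{p}P_J$ is a $(d-1)$-dimensional subspace of $\ker Q$, since $P_J$ is a manifold of critical points.) With that line added, the signature $(|J|-1,\,n-|J|,\,1)$ is established, and your Step 4 — identifying the $(d-1)$-dimensional kernel of $Q$ with $T_pP_J$ to get Bott nondegeneracy and index $(d-1)(n-|J|)$ — is correct.
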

\begin{proof}
Let  $\langle \cdot, \cdot \rangle$ denote the standard scalar product in the Euclidean space $\R^d$. 
For $u \in S^{d-1}$, we will identify the tangent space 
$$T_{u}S^{d-1} = \{v \in \R^{d}: \langle u, v \rangle = 0\}.$$ 
Let $(v_{1}, \dots, v_{n}) \in T_{(u_{1}, \dots, u_{n})} (S^{d-1})^{n}$ be a tangent vector. 
For $j=1, \dots, n$ the differential of the function $f_{\ell}$ (given by (\ref{dif})) in the direction $v_{j}$ is 
$$D_{v_{j}}f_{\ell}=-2 \langle l_{j}v_{j}, \, \sum \limits_{i=1}^{n}l_{i}u_{i} \rangle.$$ 
At a critical point of $f_{\ell}$, we must have $D_{v_{j}}f_{\ell}=0$ for all $v_{j} \in T_{u_{j}} S^{d-1}$ and $j=1, \dots, n$. 
Hence the vectors $u_{j}$ and $\sum \limits_{i=1}^n l_{i}u_{i}\not=0$ are collinear. Since this is true for every $j$, we find
that the critical points of $f_\ell$ lying in $W-E_d(\ell)$ coincide with the set of tuples $(u_{1}, \dots, u_{n})\in W$ such that $u_{j}=\pm u_{k}$ for all $j,k \in \{1, \dots, n\}$. 
This set coincides with the union of all submanifolds $P_J$ with long subset $J$. 

Let us fix an element $p \in S^{d-1}$ and consider for every long subset $J \subset \{1, \dots, n\}$ the point $p_{J} \in P_{J}$ with $u_{j}=p$ for $j \in J$ and $u_{j}=-p$ for $j \notin J$. We want to compute explicitly the Hessian of $f_{\ell}$ at the critical point $p_{J}$. By symmetry, we may assume $p=e_{1}$. We parametrize $S^{d-1}$ in a neighbourhood of $e_{1}$ by the map
$\R^{d-1} \rightarrow S^{d-1} \subset \R^{d}$ given by 
$$(r_{2}, \dots, r_{d}) \mapsto \frac{1}{(1+\sum_{2 \leq j \leq d} r^{2}_{j})^{1/2}}(e_{1}+\sum_{2 \leq j \leq d} r_{j}e_{j})$$
and in a neighbourhood of $-e_{1}$ by the map $\R^{d-1} \rightarrow S^{d-1} \subset \R^{d}$ where 
$$(r_{2}, \dots, r_{d}) \mapsto -\frac{1}{(1+\sum_{2 \leq j \leq d} r^{2}_{j})^{1/2}}(e_{1}+\sum_{2 \leq j \leq d} r_{j}e_{j}).$$
Then at the point $p_{J}$ we may express the second derivatives of $f_{\ell}$ as:
$$\frac{\partial^{2}}{\partial r^{(j_{1})}_{k_{1}} \partial r^{(j_{2})}_{k_{2}}}f_{\ell}= 
\begin{cases}  -2l^{2}_{j}+2\epsilon_{J}(j)l_{j}L_{J}, & \text{ if } (k_{1},j_{1})=(k_{2},j_{2})=(k,j), \\ -2l_{j_{1}}l_{j_{2}},& \text{ if } k_{1}=k_{2}, j_{1} \neq j_{2},\\ 
0, & \text{ if } k_{1} \neq k_{2}. \end{cases}$$ 
Here $k_{1}, k_{2} \in \{ 2, \dots, d \}$, $j_{1},j_{2} \in \{ 1, \dots, n \}$ and $L_{J}=\sum_{j \in J} l_{j}-\sum_{j \notin J}l_{j}.$ 
We have also denoted
$$\epsilon_{J}(j)= \begin{cases} +1 & j \in J, \\ -1 & j \notin J. \end{cases}$$
It follows that the Hessian of $f_{\ell}$ at $p_{J}$ is congruent to a matrix $A$ of size  ${(d-1)n \times (d-1)n}$ of the following form. Let $D \in \R^{n \times n}$ be the diagonal matrix whose $j$-th diagonal entry for $j=1, \dots, n$ is 
$-\epsilon_{J}(j) \cdot L_{J}\cdot l_{j}^{-1}$ 
and $E \in \R^{n \times n}$ be the matrix with all entries equal to $1$. Then $A$ is obtained from the difference $D-E$ by replacing every entry $\lambda$ with the matrix $\lambda I_{d-1}$ where $I_{d-1} \in \R^{(d-1) \times (d-1)}$ is the identity matrix. We conclude that the Hessian of $f_{\ell}$ has the same eigenvalues as the matrix $D-E$ and the multiplicity of every eigenvalue is multiplied by $d-1$. Using the computation in
\cite{Fa5}, Lemma 1.4, the index of the Hessian is $(n-|J|)(d-1)$ and the multiplicity of the zero eigenvalue is $(d-1)$. Since $d-1$ is also the dimension of the critical submanifold, $f_{\ell}$ is a Morse-Bott function. 
\end{proof}

We now construct a homology basis for the complement $W-E_{d}(\ell)$. 
Fix a point $p \in S^{d-1}$ and for every subset $J \subset \{1, \dots, n\}$ define submanifolds $V_{J},W_{J} \subset W$ as follows: 
$$ V_{J}= \{ (u_{1}, \dots, u_{n}) \in W \colon \text{ } u_{i}=p \text{ for } i \in J \}$$ 
and
$$W_{J}= \{ (u_{1}, \dots, u_{n}) \in W \colon \text{ }u_{i}=u_{j} \text{ for }i,j \in J \}.$$
Note that $V_J\subset W_J$ and the homology class $[V_{J}] \in H_{(d-1)(n-|J|)}(W;\Z_2)$ is independent of the choice of the point $p \in S^{d-1}$.

\begin{proposition}\label{prop44} If $J\subset \{1, \dots, n\}$ is long then the submanifold $W_J$ lies in the complement $W-E_d(\ell)$. 
The classes $[W_{J}], [V_{K}]$, where $J$, $K$ run over all long subsets of $\{1, \dots, n\}$ 
with $|J|=n-k+1$ and $|K|=n-k$, form a basis of the homology vector space $H_{(d-1)k}(W - E_{d}(\ell) ; \Z_2)$.  
\end{proposition}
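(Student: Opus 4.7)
The approach is to apply the lacunary principles of Section~\ref{sec:lac} to the Morse--Bott function $f_\ell\colon W\to\R$ of (\ref{dif}), using Lemma~\ref{CritPoints} as the main input. First, the containment $W_J \subset W - E_d(\ell)$ for long $J$ is a direct triangle-inequality estimate: if $(u_1,\dots,u_n) \in W_J$ with $u_i = u \in S^{d-1}$ for all $i \in J$, then
$$\left|\sum_{i=1}^n l_i u_i\right| \;\ge\; \sum_{i\in J} l_i - \sum_{i\notin J} l_i \;>\; 0,$$
so $f_\ell < 0$ at such a point and $(u_1,\dots,u_n) \notin E_d(\ell)$.

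Next, I reduce the homology computation to the Lacunary Principle. Pick $\epsilon > 0$ smaller than $L_J^2$ for every long $J$, where $L_J = \sum_{i\in J} l_i - \sum_{i\notin J} l_i$; then $-\epsilon$ is a regular value of $f_\ell$, and $M := \{f_\ell \le -\epsilon\}$ is a compact manifold with boundary on which $f_\ell$ attains its maximum $-\epsilon$ on $\partial M$ with $df_\ell\not=0$ there. The negative gradient flow exhibits $M$ as a deformation retract of $W - E_d(\ell)$. By Lemma~\ref{CritPoints} the critical submanifolds of $f_\ell$ in the interior of $M$ are exactly the spheres $P_J \cong S^{d-1}$ indexed by long subsets $J$; each $P_J$ is $(d-1)$-lacunary and its Morse--Bott index $(d-1)(n-|J|)$ is divisible by $d-1$. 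Proposition~\ref{Lac} applied with $k = d-1$ yields
$$H_{\ast}(W - E_d(\ell);\Z_2) \;\cong\; \bigoplus_{J \text{ long}} H_{\ast-(d-1)(n-|J|)}(S^{d-1};\Z_2),$$
so the homology is concentrated in degrees divisible by $d-1$ with the correct total rank in each such degree.

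To exhibit the explicit basis, apply Proposition~\ref{prop1} with $C = P_J$ and enlarging submanifold $W_C := W_J$. One has $P_J\subset W_J$ and $\dim W_J = (d-1)(n-|J|+1) = \ind_{f_\ell}(P_J)+\dim P_J$. Identifying $W_J$ with $(S^{d-1})^{n-|J|+1}$ by taking the common value of $u_i$, $i\in J$, as a single coordinate, the restriction $f_\ell|W_J$ becomes the function of Lemma~\ref{CritPoints} for the shorter length vector $\ell'=(m_J, l_{i_1},\dots,l_{i_{n-|J|}})$ with $m_J=\sum_{i\in J} l_i$; since $\{m_J\}$ is a long subset of $\ell'$ the corresponding polygon space is empty, so $f_\ell|W_J$ is globally Morse--Bott on $W_J$ and attains its maximum precisely on $P_J$. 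Taking $\W_{P_J}=\{W_J, V_J\}$, I observe $W_J\cap P_J = P_J$ and $V_J\cap P_J = \{(p,\dots,p,-p,\dots,-p)\}$; these represent the fundamental and point classes of $P_J\cong S^{d-1}$, and the intersections are transversal inside $W_J$ by a routine tangent-space check. Proposition~\ref{prop1} then gives that $\{[W_J],[V_J]\}_{J\text{ long}}$ is a basis of $H_\ast(W-E_d(\ell);\Z_2)$, and re-indexing by degree $(d-1)k$ (so that $|J|=n-k+1$ corresponds to $[W_J]$ and $|K|=n-k$ to $[V_K]$) recovers the statement. The main technical point is verifying the Morse--Bott structure of $f_\ell|W_J$, which is exactly what the reduction to the shorter length vector $\ell'$ is designed to handle.
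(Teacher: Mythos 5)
Your proof is correct and follows essentially the same route as the paper: identify $W-E_d(\ell)$ up to homotopy with a sublevel set of $f_\ell$, invoke Lemma \ref{CritPoints} and the lacunary principle, and then apply Proposition \ref{prop1} with $\W_{P_J}=\{V_J,W_J\}$ to get the basis. The extra details you supply (the triangle-inequality containment $W_J\subset W-E_d(\ell)$ and the verification that $f_\ell|W_J$ is Morse--Bott with maximum on $P_J$, via the reduced length vector) are correct and make explicit what the paper leaves to the reader.
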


We emphasize that in this statement we do not require the length vector $\ell$ to be generic. 

\begin{proof} Let $a<0$ be such that 
$$-(\sum_{i\in J} l_i - \sum_{i\notin J}l_i)^2<a<0$$
for any long subset $J\subset \{1, \dots, n\}$. The preimage $W^a=f_\ell^{-1}(-\infty, a]$ is a compact manifold with boundary and the inclusion $W^a\subset W-E_d(\ell)$ is a homotopy equivalence since all critical points of $f_\ell$, which lie  in $W-E_d(\ell)$, lie in fact in $W^a$. 

By Lemma \ref{CritPoints} the restriction of $f_{\ell}$ onto $W^a$ is a Morse-Bott function with the critical submanifolds 
$P_J$ (labeled by long subsets $J$) which all are $(d-1)$-dimensional spheres, and thus each $P_J$ is $(d-1)$-lacunary. 
We claim that all conditions of the Morse-Bott lacunary principle as described in \S \ref{sec:lac} are satisfied. 
Indeed, all Morse-Bott indices are multiples of $d-1$ by 
Lemma \ref{CritPoints}. Moreover, the function $f_\ell|W_J$ achieves its maximum at $P_J\subset W_J$. If we denote by $\W_J=\{V_J, W_J\}$ the family consisting of two submanifolds then the assumptions of Proposition \ref{prop1} are satisfied; in particular the homology classes of the intersections $W_J\cap P_J=P_J$ and 
$V_J\cap P_J=\{\ast\}$ form a basis of $H_\ast(P_J;\Z_2)$. Thus, by Proposition \ref{prop1}, the homology classes 
$$[V_J]\in H_{(d-1)(n-|J|)}(W-E_d(\ell);\Z_2)$$ and $$[W_J]\in H_{(d-1)(n-|J|+1)}(W-E_d(\ell);\Z_2),$$ where $J$ runs over all long subsets, form a basis of the homology group
$H_\ast(W-E_d(\ell);\Z_2)$. 
\end{proof}

Clearly, a basis of $H_{(d-1)k}(W;\Z_2)$ is given by the collection of classes $[V_{K}]$ where $K \subset \{1, \dots, n\}$ is an arbitrary subset with $|K|=n-k$. 
It is easy to see that for $|K|+|K'|=n$
the intersection numbers in $W$ of these classes are given by the formula
\begin{eqnarray}\label{intnumber}
[V_{K}] \cdot [V_{K'}]= \begin{cases}  1, & \text{if } K \cap K'= \emptyset ,\\ 0 & \text{if}\,  K\cap K'\not=\emptyset. \end{cases}
\end{eqnarray}

We will also need the intersection numbers in $W$ involving the classes $[W_J]\in H_{(d-1)(n-|J|+1)}(W;\Z_2)$. Note that $[W_J]$ and $[V_K]$ have complementary dimension in 
$W$ if $|J|+|K|=n+1$. Besides, the classes $[W_J]$ and $[W_K]$ have complementary dimension in $W$ if $|J|+|K|=n+2$.

\begin{lemma} \label{IntLemma}
(A) For $J,K \subset \{1, \dots, n\}$ with $|J|+|K|=n+1$ the intersection number of the homology classes $[W_{J}]$ and $[V_{K}]$ in $W$ is 
 $$[W_{J}] \cdot [V_{K}]= \begin{cases} 1, & \text{if }\,  |J \cap K|= 1, \\ 0, & \text{if}\, \,  |J \cap K| \not= 1. \end{cases}$$
(B) For every pair of subsets $J, K \subset \{1, \dots, n\}$ satisfying $|J|+|K|= n+2$ the intersection number $[W_{J}] \cdot [W_{K}]\in \Z_2$ in $W$ is trivial\footnote{Many statements made in this section hold essentially without changes with integral coefficients. However, as the proof below shows, over integers the intersection number 
$[W_{J}] \cdot [W_{K}]$
 can be nonzero even.}.
\end{lemma}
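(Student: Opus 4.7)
The plan is to prove (A) via a direct transversality argument (perturbing the $V_K$ representative), and then reduce (B) to (A) by first expanding $[W_J]$ in a basis of the appropriate homology group of $W$.

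For (A), I would first replace $V_K$ by the homologous cycle $V_K^{\vec q} = \{(u_1, \dots, u_n) : u_r = q_r \text{ for all } r \in K\}$, where the $q_r \in S^{d-1}$ are distinct generic points indexed by $r \in K$ (homologous because the space of such tuples is connected). A point of $W_J \cap V_K^{\vec q}$ must satisfy $u_i = u_j$ for $i, j \in J$ and $u_r = q_r$ for $r \in K$. If $|J \cap K| \geq 2$ this would force $q_{r_1} = q_{r_2}$ for distinct $r_1, r_2 \in J \cap K$, contradicting genericity, so the intersection is empty. If $|J \cap K| = 1$, all coordinates in $J \cup K$ are forced, and since $|J| + |K| = n + 1$ gives $|J \cup K| = n$ there are no free coordinates; a short tangent-space check confirms transversality at this single intersection point. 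The case $J \cap K = \emptyset$ is excluded because $|J| + |K| = n + 1 > n$.

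For (B), since $W_J$ admits no analogous perturbation parameter, the plan is to expand $[W_J]$ in the homological basis $\{[V_K] : |K| = |J| - 1\}$ of $H_{(d-1)(n - |J| + 1)}(W; \Z_2)$ (a basis by K\"unneth on $W = (S^{d-1})^n$), whose dual basis under the intersection pairing is $\{[V_{\bar K}]\}$ by (\ref{intnumber}). Applying part (A), the coefficient $[W_J] \cdot [V_{\bar K}]$ equals $1$ exactly when $|J \cap \bar K| = |J \setminus K| = 1$, which together with $|K| = |J| - 1$ forces $K = J \setminus \{j\}$ for some $j \in J$. This gives the clean expansion
\[
[W_J] = \sum_{j \in J} [V_{J \setminus \{j\}}].
\]

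Substituting and applying (\ref{intnumber}) termwise yields
\[
[W_J] \cdot [W_K] = \sum_{j \in J,\, k \in K} [V_{J \setminus \{j\}}] \cdot [V_{K \setminus \{k\}}],
\]
where the $(j, k)$ term equals $1$ iff $(J \setminus \{j\}) \cap (K \setminus \{k\}) = \emptyset$. Under the hypothesis $|J| + |K| = n + 2$ this forces $|J \cap K| = 2$ with $\{j, k\} = J \cap K$; writing $J \cap K = \{a, b\}$ with $a \ne b$, exactly two pairs contribute, namely $(j, k) = (a, b)$ and $(b, a)$, summing to $2 \equiv 0 \pmod{2}$. If $|J \cap K| \geq 3$ no pair contributes, and the cases $|J \cap K| \leq 1$ are excluded by the dimension constraint. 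Hence $[W_J] \cdot [W_K] = 0 \in \Z_2$. I expect the main obstacle to be precisely this $2$-term cancellation modulo $2$: it is exactly what fails over the integers (as the footnote notes), so the argument is genuinely characteristic-$2$ and cannot be lifted to $\Z$.
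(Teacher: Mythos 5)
Your argument is correct, and it splits cleanly into a part that matches the paper and a part that does not. For (A) you follow essentially the paper's own route: the paper perturbs only one coordinate of $V_K$ (moving $u_k$ to a second point $p'$ for a single chosen $k\in J\cap K$), which already forces $W_J\cap V'_K=\emptyset$ when $|J\cap K|\ge 2$, and both treatments then verify the single transverse intersection point when $|J\cap K|=1$. For (B) you genuinely diverge. The paper argues geometrically: it picks $j\in J\cap K$, applies to the $j$-th factor a diffeomorphism of $S^{d-1}$ homotopic to the identity with no fixed points ($d$ even) or two nondegenerate fixed points ($d$ odd), and observes that the perturbed $W_J$ meets $W_K$ in either zero or exactly two transverse points, so the mod $2$ count vanishes. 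You instead derive the expansion $[W_J]=\sum_{j\in J}[V_{J\setminus\{j\}}]$ from part (A) together with the duality (\ref{intnumber}), and reduce (B) to counting pairs $(j,k)$ with $(J\setminus\{j\})\cap(K\setminus\{k\})=\emptyset$; these come in the two-element set $\{(a,b),(b,a)\}$ when $J\cap K=\{a,b\}$, and not at all when $|J\cap K|\ge 3$, so the sum is $0$ in $\Z_2$. Both are complete proofs. Your route has the bonus of establishing the identity (\ref{www}) as a corollary of (A) — the paper needs that identity later anyway and justifies it separately via the standard description of the diagonal class in a product of spheres — while the paper's route keeps the argument geometric and makes the ``zero or two points'' phenomenon visible, which is what its footnote on integral coefficients alludes to; your closing remark that the cancellation is exactly $2\equiv 0\pmod 2$ captures the same obstruction algebraically.
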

\begin{proof}
Let $|J \cap K| > 1$. Fix $k \in K\cap J$ and $p' \in S^{d-1}$ with $p \neq p'$. Define a submanifold $V'_{K} \subset W$ by $$V'_{K}=\{(u_{1}, \dots, u_{n}) \in W: u_{j}=p \text{ for } j \in K -\{k\} \text{ and } u_{k}=p'\}$$
Then $[V_{K}]=[V'_{K}]$ and $W_{J} \cap V'_{K}= \emptyset$. We conclude that $[W_{J}] \cdot [V_{K}]=0$. \\ \\  
If $|J \cap K|=1$ then $W_{J}$ and $V_{K}$ have a unique point of intersection given by $u_{j}=p$ for $j=1, \dots, n$. It is not difficult to see that the intersection is transverse, compare \cite{Fa2}. This proves (A). 

To demonstrate the claim (B) note that $|J \cap K|\ge 2$ and let $j \in J \cap K$. Fix a diffeomorphism $ \varphi: S^{d-1} \rightarrow S^{d-1}$ which is homotopic to the identity and has no fixed points (if $d$ is even) or has two non-degenerate fixed points (if $d$ is odd). Define 
$\Phi: W=(S^{d-1})^{n} \to W=(S^{d-1})^{n}$ as the map which applies 
$\varphi$ to the $j$-th factor and is identical on all other factors. Then $[\Phi(W_{J})]=[W_{J}]$ and the 
submanifolds $\Phi(W_{J})$ and $W_{K}$ are either disjoint or intersect transversally in exactly two points. Hence the {\it mod} 2 intersection number
$[W_J]\cdot [W_K]$ is trivial. 
\end{proof}

We will now use Lemma \ref{IntLemma} to pass to a new basis of $H_{(d-1)k}(W;\Z_2)$ which will be more convenient for the proof of Proposition \ref{ECohomology}. 

\begin{lemma}\label{lm46}
For $0 \leq k \leq n$, the homology classes $[W_{J}]$ and $[V_{K}]$ with 
$|J|=n-k+1$,  $|K|=n-k$ and $n \in J$, $n \in K$ form a basis 
of $H_{(d-1)k}(W;\Z_2)$. 
\end{lemma}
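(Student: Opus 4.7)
The rank of $H_{(d-1)k}(W;\Z_2)$ equals $\binom{n}{k}$, since $W=(S^{d-1})^n$ has $\Z_2$-cohomology ring $\Lambda_{\Z_2}(x_1,\ldots,x_n)$ with $\deg x_i=d-1$. The proposed family has cardinality $\binom{n-1}{n-k-1}+\binom{n-1}{n-k}=\binom{n-1}{k}+\binom{n-1}{k-1}=\binom{n}{k}$ by Pascal's identity, so it suffices to prove linear independence.

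I will pair the proposed family against the standard basis $\{[V_{K'}]:K'\subset\{1,\ldots,n\},\,|K'|=k\}$ of the complementary-degree group $H_{(d-1)(n-k)}(W;\Z_2)$ and show that the resulting intersection matrix $M$ over $\Z_2$ is invertible. I order the rows of $M$ so that the $V$-type classes $[V_{\{n\}\cup K}]$ with $K\subset\{1,\ldots,n-1\}$, $|K|=n-k-1$, come first and the $W$-type classes $[W_{\{n\}\cup J}]$ with $J\subset\{1,\ldots,n-1\}$, $|J|=n-k$, come second. I order the columns so that the $V_{K'}$ with $n\in K'$, written $K'=\{n\}\cup K''$ with $|K''|=k-1$, come first and those with $n\notin K'$, so $K'\subset\{1,\ldots,n-1\}$ with $|K'|=k$, come second. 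Formula~(\ref{intnumber}) immediately gives that the top-left block of $M$ is zero, since $V_{\{n\}\cup K}$ and $V_{\{n\}\cup K''}$ share the element $n$. The top-right block $A$ has entry $1$ exactly when $K\cap K'=\emptyset$; since $K,K'\subset\{1,\ldots,n-1\}$ and $|K|+|K'|=n-1$, this forces $K'=\{1,\ldots,n-1\}\setminus K$, so $A$ is a square permutation matrix. By Lemma~\ref{IntLemma}(A), the bottom-left block $C$ has entry $1$ exactly when $|(\{n\}\cup J)\cap(\{n\}\cup K'')|=1$, i.e.\ when $J\cap K''=\emptyset$; the identity $|J|+|K''|=n-1$ again forces complementarity, so $C$ is a square permutation matrix too. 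The bottom-right block is arbitrary.

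Therefore $M$ has the block form $\bigl(\begin{smallmatrix} 0 & A \\ C & D \end{smallmatrix}\bigr)$ with $A$ and $C$ square permutation matrices, and such a matrix is invertible over any field: from $M\bigl(\begin{smallmatrix}x\\ y\end{smallmatrix}\bigr)=0$ we deduce $Ay=0$ whence $y=0$, and then $Cx=0$ whence $x=0$. Consequently the proposed family pairs nondegenerately with a basis of the complementary-degree homology, is linearly independent, and, having the correct cardinality, is a basis of $H_{(d-1)k}(W;\Z_2)$. The main obstacle is essentially combinatorial: arranging the rows and columns so that the two off-diagonal blocks become permutation matrices, which in the end relies on the observation that two subsets of $\{1,\ldots,n-1\}$ of complementary cardinality are disjoint iff they partition $\{1,\ldots,n-1\}$.
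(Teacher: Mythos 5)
Your proof is correct and follows essentially the same strategy as the paper: count the classes via Pascal's identity and then establish linear independence by pairing against complementary-degree classes using formula (\ref{intnumber}) and Lemma \ref{IntLemma}. The only difference is in the choice of test classes: the paper exhibits the explicit dual system $[W_J]^\ast=[V_{\hat J}]$, $[V_K]^\ast=[W_{\hat K}]$ with $\hat I=\bar I\cup\{n\}$, making the intersection matrix the identity, whereas you pair against the full standard basis of $V$-classes and obtain a block anti-triangular matrix with permutation blocks -- a variant with the minor advantage of not needing part (B) of Lemma \ref{IntLemma}.
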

\begin{proof} Since the number of the specified elements 
$\binom {n-1} {k-1} + \binom {n-1} k$ 
coincides with the dimension $\binom n k$ of 
$H_{(d-1)k}(W; \Z_2)$, it suffices to check that the elements of Lemma \ref{lm46} are linearly independent. For this purpose we will construct a dual system (with respect to the intersection form) as follows.
For a subset $I\subset \{1, \dots, n\}$ containing the index $n$ we define $\hat I= \bar I\cup \{n\}$, where $\bar I$ denotes the complement of $I$. 
Then the following system forms a dual basis to the system of element specified in Lemma \ref{lm46}:
$$[W_J]^\ast= [V_{\hat J}], \quad [V_K]^\ast= [W_{\hat K}].$$
This follows directly from formula (\ref{intnumber}) and Lemma \ref{IntLemma}. The existence of a dual system implies that the system of Lemma \ref{lm46} is linearly independent. 

This completes the proof.\end{proof}


%

The proof of Proposition \ref{ECohomology} relies on the study of the homomorphism 
\begin{eqnarray}\label{jk}
j_{k} \colon H_{(d-1)k}(W-E_{d}(\ell);\Z_{2}) \to H_{(d-1)k}(W;\Z_{2})
\end{eqnarray}
induced by the inclusion. 
We will use the basis of $H_{(d-1)k}(W-E_{d}(\ell);\Z_{2})$ given by Proposition \ref{prop44} and the basis of $H_{(d-1)k}(W;\Z_{2})$ given by Lemma \ref{lm46}. 
We shall split $$H_{(d-1)k}(W-E_{d}(\ell);\Z_{2}) = A_{k} \oplus A'_{k} \oplus B_{k} \oplus B'_{k},$$ where: 
\begin{itemize}  
\item  $A_{k}$ (resp $A'_{k}$) is generated by classes $[W_{J}]$ with $|J|=n-k+1$, $J$ long and $n \in J$ (resp. $n \notin J$). 

\item $B_{k}$ (resp. $B'_{k}$) is generated by classes $[V_{K}]$ with $|K|=n-k$, $K$ long and $n \in K$ (resp. $n \notin K$). 
\end{itemize}

Let us also split $$H_{(d-1)k}(W;\Z_{2})=A_{k} \oplus B_{k} \oplus C_{k} \oplus D_{k},$$
where $A_{k}$ and $B_{k}$ are as above and
\begin{itemize}

\item $C_{k}$ is generated by the classes $[W_{J}]$ with $|J|=n-k+1$, $n \in J$ and $J$ is short or median.

\item $D_{k}$ is generated by the classes $[V_{K}]$ with $|K|=n-k$, $n \in K$ and $K$ is short or median. 
\end{itemize}

The map $j_{k}$ clearly restricts to the identity on $A_{k}$ and on $B_{k}$.
 
\begin{lemma} One has \begin{eqnarray}\label{partone}
j_{k}(A'_{k}) \subset A_{k}\end{eqnarray}
and 
\begin{eqnarray}\label{parttwo}
j_{k}(B'_{k}) \subset A_{k}\oplus B_k.
\end{eqnarray}
\end{lemma}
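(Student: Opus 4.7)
The plan is to write each basis element in $A'_k$ or $B'_k$, viewed inside $H_{(d-1)k}(W;\Z_2)$ via $j_k$, in terms of the Lemma \ref{lm46} basis, using the dual system $[W_{J'}]^\ast = [V_{\hat{J'}}]$, $[V_{K'}]^\ast = [W_{\hat{K'}}]$ introduced in the proof of that lemma. The coefficients are intersection numbers which are governed by (\ref{intnumber}) and Lemma \ref{IntLemma}; the task then reduces to checking that every intersection number that is nonzero corresponds to a \emph{long} index set, so that the contribution lands in $A_k$ or $B_k$ rather than $C_k$ or $D_k$.

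First I would handle $[W_J]\in A'_k$, with $J$ long, $|J|=n-k+1$, and $n\notin J$. Writing $j_k([W_J])=\sum a_{J'}[W_{J'}]+\sum b_{K'}[V_{K'}]$, the coefficient $b_{K'}=[W_J]\cdot[W_{\hat{K'}}]$ vanishes for every $K'$ by Lemma \ref{IntLemma}(B), since $|J|+|\hat{K'}|=(n-k+1)+(k+1)=n+2$. So only the $[W_{J'}]$ terms survive, with $a_{J'}=[W_J]\cdot[V_{\hat{J'}}]$. By Lemma \ref{IntLemma}(A), this is nonzero exactly when $|J\cap\hat{J'}|=1$; using $n\in\hat{J'}$ and $n\notin J$, this forces $J\cap\bar{J'}=\{i\}$ for a unique $i$, so that $J'=(J\setminus\{i\})\cup\{n\}$. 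Since $\ell$ is ordered, $l_n\ge l_i$, and therefore
\[
\textstyle\sum_{j\in J'}l_j-\sum_{j\notin J'}l_j \;=\; \Big(\sum_{j\in J}l_j-\sum_{j\notin J}l_j\Big) + 2(l_n-l_i) \;>\;0,
\]
the last inequality because $J$ is long. Hence $J'$ is long, so $[W_{J'}]\in A_k$, which gives (\ref{partone}).

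For (\ref{parttwo}) I would treat $[V_K]\in B'_k$ (so $K$ long, $|K|=n-k$, $n\notin K$) analogously. Expanding $j_k([V_K])=\sum a_{J'}[W_{J'}]+\sum b_{K'}[V_{K'}]$, formula (\ref{intnumber}) gives $a_{J'}=[V_K]\cdot[V_{\hat{J'}}]=1$ iff $K\cap\hat{J'}=\emptyset$. Because $|K|+|\hat{J'}|=n$ and $n\in\hat{J'}\setminus K$, this is equivalent to $J'=K\cup\{n\}$, a single candidate; and that $J'$ is long since $S_{J'}-S_{\bar{J'}}=(S_K-S_{\bar K})+2l_n>0$, so this term lies in $A_k$. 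On the other hand $b_{K'}=[V_K]\cdot[W_{\hat{K'}}]$ is, by Lemma \ref{IntLemma}(A), nonzero iff $|K\cap\hat{K'}|=1$, which (using $n\notin K$, $n\in\hat{K'}$) forces $K'=(K\setminus\{i\})\cup\{n\}$ for a unique $i$; the same swap argument as above, using $l_n\ge l_i$ and long-ness of $K$, shows $K'$ is long, so this term lies in $B_k$.

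There is really no deep obstacle here: the computation is pure bookkeeping around the dual-basis formulas of Lemma \ref{lm46} and the intersection formulas from Lemma \ref{IntLemma}. The only place where something genuinely has to be verified, rather than merely tracked, is the passage from long-ness of $J$ (resp.\ $K$) to long-ness of the unique $J'$ (resp.\ $J'$ and $K'$) for which the intersection number is nonzero; and in every case this passage is the same ``exchange a shorter bar $l_i$ for the longest bar $l_n$'' manipulation, which is why the ordering hypothesis $l_1\le\cdots\le l_n$ is essential at this step.
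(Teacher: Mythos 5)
Your proof is correct and takes essentially the same route as the paper's: a Fourier expansion in the dual basis from Lemma \ref{lm46}, with the coefficients killed by Lemma \ref{IntLemma}(B) and formula (\ref{intnumber}), and the surviving terms identified as long via the ``swap $l_i$ for $l_n$'' inequality. The only (cosmetic) difference is that you argue contrapositively — showing the unique surviving index set must be long — which in fact handles the median subsets in $C_k$ and $D_k$ slightly more explicitly than the paper's phrasing, but the substance is identical.
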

\begin{proof} Let $[W_{I}] \in A'_{k}$, i.e. $|I|=n-k+1$, $I$ is long and $n \notin I$. Using the dual basis described in the proof of Lemma \ref{lm46} 
we obtain the following Fourier decomposition
$$j_k([W_I]) = \sum_{[W_J]} \left([W_I]\cdot [V_{\hat J}]\right) [W_J] + \sum_{[V_K]} \left([W_I]\cdot [W_{\hat K}]\right)[V_K],$$
where $J$ and $K$ run over subsets of $\{1, \dots, n\}$ containing $n$ with $|J|=|I|$ and $|K|=|I|-1$. 

The coefficients $[W_I]\cdot [W_{\hat K}]$ in the second sum vanish according to statement (B) of Lemma \ref{IntLemma}. 
We claim that the coefficient $[W_I]\cdot [V_{\hat J}]$ of the first sum also vanishes assuming that $J$ is short and $n\in J$. 
Indeed, if this coefficient is nonzero then by Lemma \ref{IntLemma} $|I\cap \hat J|=1$ which means that $J$ is obtained from 
$I$ by removing one of its elements and adding $n$; but this is inconsistent with the assumptions that $I$ is long, $J$ is short and $l_n \ge l_i$ for any $i=1, \dots, n$. 
This proves (\ref{partone}). 

To prove (\ref{parttwo}), consider a long subset $L\subset \{1, \dots, n\}$ with $n\notin L$. As above we have the Fourier decomposition 
$$j_k([V_L]) = \sum_{[W_J]} \left([V_L]\cdot [V_{\hat J}]\right) [W_J] + \sum_{[V_K]} \left([V_L]\cdot [W_{\hat K}]\right)[V_K],$$
where $J$ and $K$ run over subsets of $\{1, \dots, n\}$ containing $n$ with $|J|=|L|+1$ and $|K|=|L|$. 

The coefficient $[V_L]\cdot [V_{\hat J}]$ of the first sum vanishes if $J$ is short. Indeed, since by (\ref{intnumber}), if the intersection number $[V_L]\cdot [V_{\hat J}]\not=0$ 
then $L=J-\{n\}$ implying that $L$ is also short which contradicts our assumption. 

The coefficient
$[V_L]\cdot [W_{\hat K}]$ in the second sum vanishes if $K$ is short. Indeed, if this intersection number is nonzero then by Lemma \ref{IntLemma}
$|L\cap \hat K|=1$ implying that $K$ is obtained from $L$ by removing one of the elements and adding $n$. Since $L$ is long and $l_n \ge l_i$ for all $i=1, \dots, n$ it follows that 
$K$ must be long as well. This proves (\ref{parttwo}).  \end{proof}

\begin{corollary} \label{Image}
The image of $j_{k}$ (see (\ref{jk})) is generated by the homology classes $[W_{J}]$, $[V_{K}]$ where $J$ and $K$ are long subsets containing $n$ with $|J|=n-k+1$ and $|K|=n-k$. 
\end{corollary}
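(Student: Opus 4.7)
The plan is to package the preceding lemma into a direct conclusion about the image, using the fact that $j_k$ acts as the identity on the ``good'' summands $A_k$ and $B_k$, while the lemma just established that the ``bad'' summands $A'_k$ and $B'_k$ also land inside $A_k\oplus B_k$. Concretely, I would proceed as follows.

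First, I would recall that Proposition \ref{prop44} gives the basis decomposition
\[
H_{(d-1)k}(W-E_d(\ell);\Z_2)=A_k\oplus A'_k\oplus B_k\oplus B'_k,
\]
and that Lemma \ref{lm46} gives the basis of $H_{(d-1)k}(W;\Z_2)$ containing $A_k$ and $B_k$ as subspaces (the classes $[W_J]$ and $[V_K]$ with $n\in J$, $n\in K$ and $J,K$ long are present in both bases). Since both bases contain these classes, the map $j_k$ is tautologically the identity on the summand $A_k\oplus B_k\subset H_{(d-1)k}(W-E_d(\ell);\Z_2)$. This gives the inclusion $A_k\oplus B_k\subset \mathrm{Im}(j_k)$.

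Second, for the reverse inclusion, I would invoke the previous lemma: $j_k(A'_k)\subset A_k$ and $j_k(B'_k)\subset A_k\oplus B_k$. Combining with the identity on $A_k\oplus B_k$, the image of $j_k$ is contained in $A_k\oplus B_k$. Hence $\mathrm{Im}(j_k)=A_k\oplus B_k$, which by definition is the $\Z_2$-span of the classes $[W_J]$ with $|J|=n-k+1$, $n\in J$ and $J$ long, together with the classes $[V_K]$ with $|K|=n-k$, $n\in K$ and $K$ long. This is exactly the statement of the corollary.

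There is really no obstacle here beyond bookkeeping: the content has been absorbed into the previous lemma, whose role was precisely to show that $j_k$ cannot produce any component in $C_k$ or $D_k$. The only thing to be careful about is making sure the summands match between the two bases, namely that the $[W_J]$ (resp.\ $[V_K]$) with $J$ (resp.\ $K$) long and containing $n$ really sit inside both the source and target bases in the same way, which is immediate from the definitions of $A_k, B_k$ in the splittings above.
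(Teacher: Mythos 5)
Your argument is correct and is exactly the one the paper intends: the corollary is stated without proof precisely because it follows immediately from the observation that $j_k$ restricts to the identity on $A_k$ and $B_k$ together with the preceding lemma's inclusions $j_k(A'_k)\subset A_k$ and $j_k(B'_k)\subset A_k\oplus B_k$. Your bookkeeping matches the paper's decompositions, so there is nothing to add.
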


We are now ready to prove Proposition \ref{ECohomology}.

{\it Proof of Proposition \ref{ECohomology}}. 
Consider the cohomological long exact sequence
$$H^{k(d-1)}(W;\Z_{2}) \rightarrow H^{k(d-1)}(E_{d}(\ell);\Z_{2}) \rightarrow H^{k(d-1)+1}(W,E_{d}(\ell);\Z_{2}).$$

By Poincar\'e duality and excision $$H^{k(d-1)+1}(W,E_{d}(\ell);\Z_{2}) \simeq H_{(n-k)(d-1)-1} (W-E_{d}(\ell);\Z_{2})=0$$ 
(since by Proposition \ref{prop44} nontrivial homology of $W-E_d(\ell)$ is concentrated in degrees divisible by $d-1$)
and hence the map 
$$i^{k}: H^{k(d-1)}(W;\Z_{2}) \rightarrow H^{k(d-1)}(E_{d}(\ell);\Z_{2})$$ 
induced by inclusion is surjective. On the other hand, from the exact sequence  
$$H^{k(d-1)}(W,E_{d}(\ell);\Z_{2}) \to H^{k(d-1)}(W;\Z_{2}) \stackrel{i^{k}}\rightarrow H^{k(d-1)}(E_{d}(\ell);\Z_{2})$$ 
and the commutative square


\begin{xy} \xymatrix @C=0.5in{ \hspace{50pt}
H^{k(d-1)}(W,E_{d}(l); \Z_{2}) \ar[d]^/0em/{PD}  
\ar[r] & H^{k(d-1)}(W;\Z_{2}) \ar[d]^/0em/{PD}  
  \\ \hspace{50pt}
H_{(n-k)(d-1)}(W-E_{d}(l);\Z_{2}) 
\ar[r]^/1.8em/{j_{n-k}} & H_{(n-k)(d-1)}(W;\Z_{2}) 
}\end{xy}

where the columns are Poincar\'e duality maps, we see that the kernel of $i^{k}$ consists exactly of the Poincar\'e duals of the elements of the image of $j_{n-k}$.
  
We identify $H^{*}(W; \Z_{2})$ with the exterior algebra generated by the classes $$X_{j}=\pi^{*}_{j}[S^{d-1}]\in H^{d-1}(W;\Z_2),$$ 
where $\pi_{j}: W=(S^{d-1})^{n} \rightarrow S^{d-1}$ is the projection onto the $j$-th factor, $j=1, \dots, n$ and $[S^{d-1}] \in H^{d-1}(S^{d-1}; \Z_{2})$ is the fundamental class. Then 
$$H^{(d-1)*}(E_{d}(\ell);\Z_{2}) \simeq \Lambda_d (X_{1}, \dots, X_{n})/I$$ where $I=\text{ker }i^{k}$.

By Corollary \ref{Image} the kernel of $i^{k}$ consists of linear combinations
of the Poincar\'e duals of all the classes $[V_{J}],[W_{J}] \in H_{(d-1)*}(W;\Z_{2})$ where $J$   runs over long subsets of $\{1, \dots, n\}$ containing the index $n$. 

It is obvious that the Poincar\'e dual of the class $[V_{J}]$ is the monomial $$X^{J}=X_{j_{1}} \dots X_{j_{k}}, \quad \text{where}\quad  J=\{j_{1}, \dots, j_{k}\}.$$

From the definition of $W_J$ it follows that
\begin{eqnarray}\label{www}[W_{J}]=\sum \limits_{j \in J}\,  [V_{J_j}]\in H_\ast(W;\Z_2),\end{eqnarray}
where for $j\in J$ the symbol $J_j$ denotes $J-\{j\}$. 
This follows from the well known fact that the $\Z_2$-homology class of the diagonal in a product of spheres equals the sum of the fundamental classes of spheres in the product.
To apply this observation we note that $W_J$ can be viewed as a product of the diagonal $\Delta \subset \prod_{i\in J}S_i^{d-1}$ times $\prod_{i\notin J}S_i^{d-1}$ where
$S^{d-1}_i$ denotes the sphere $\{u_j=p; \mbox{for all} \, j\not=i\}\subset W$. 

It follows from (\ref{www}) that the Poincar\'e dual of the class $[W_{J}]$ is the cohomology class 
$$\sum \limits_{j \in J} X^{J_j}\in H^\ast(W;\Z_2).$$

Hence we obtain that the kernel of $i^{k}$ is the ideal $I$ which is additively generated by all the monomials $X^{J}=X_{j_{1}} \dots X_{j_{k}}$ 
so that $J$ is long with respect to $\ell$ and $n\in J$ 
together with the polynomials of the form 
$\sum \limits_{j \in J} X^{J_{j}}$ where the subset $J\subset \{1, \dots, n\}$ is long with respect to $\ell$ and $n \in J$.


Now we define a new multiplicative basis $Z_{1}, \dots, Z_{n}$ of $H^{*}(W;\Z_{2})$ as follows
$$Z_{j}=X_{j}+X_{n} \quad \mbox{for}\quad j=1, \dots, n-1, \quad \mbox{and}\quad Z_{n}=X_{n}.$$ 
Then for a subset $J=\{j_1, \dots, j_k\}$ 
of $\{1, \dots, n\}$ 
the monomial 
$Z^J=Z_{j_1}\dots Z_{j_k}$ equals
$$Z^J= \left\{
\begin{array}{lll}
X^J, & \mbox{if} & n\in J,\\
X^J +\left(\sum_{j\in J}X^{J_j}\right)X_n, &\mbox{if} &n\notin J.
\end{array}\right.
$$
The last equality may also be expressed as follows. For a subset $J\subset \{1, \dots, n\}$ containing $n$ denote $J'=J-\{n\}$. Then $$Z^{J'}=\sum_{j\in J}X^{J_j}.$$

We see that in the new basis $Z_1, \dots, Z_n$ the ideal $I\subset \Lambda_d(Z_1, \dots, Z_n)$ is additively generated by the monomials 
$Z^J$ where $J\subset \{1, \dots, n\}$ either contains $n$ and is long with respect to $\ell$ or $J$ does not contain $n$ and $J\cup \{n\}$ is long with respect to $\ell$. 
Hence, the set of monomoals $Z^J$ where $n\notin J$ and $J\cup \{n\}$ is long with respect to $\ell$ forms a multiplicative basis of $I$. 

This completes the proof of Proposition \ref{ECohomology}. \qed

\section{Proof of Theorem \ref{Wal}}

The proof of Theorem \ref{Wal} relies on Proposition \ref{ECohomology} and on 
an algebraic result of J. Gubeladze \cite{Gub} which we recall below. 
Given a commutative ring $R$, an ideal $I \subset R[Z_{1}, \dots, Z_{n}]$ of the polynomial ring is {\it a monomial ideal} if it is generated by a set of monomials 
$X^{a_{1}}_{1} \dots X^{a_{m}}_{m}$, where $a_{i} \geq 0$.

\begin{theorem}[\cite{Gub}]
Let $R$ be a commutative ring and let $I \subset R[X_{1}, \dots, X_{m}]$, $I' \subset R[Y_{1}, \dots, Y_{m'}]$ be two monomial ideals. 
Assume that $I \cap \{X_{1}, \dots, X_{m}\}=\emptyset$, $I' \cap \{Y_{1}, \dots, Y_{m'}\}=\emptyset$ and there is an isomorphism $$R[X_{1}, \dots, X_{m}]/I \simeq  R[Y_{1}, \dots, Y_{m'}]/I'$$ of $R$-algebras. 
Then $m=m'$ and there is a bijection $$\{X_{1}, \dots, X_{m}\} \rightarrow \{Y_{1}, \dots, Y_{m'}\}$$ which maps $I$ to $I'$. 
\end{theorem}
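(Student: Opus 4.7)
The plan is to recover the monoid of monomials, and ultimately the set of variables, from the intrinsic $R$-algebra structure of $A = R[X_1,\dots,X_m]/I$. First, since $I$ is a proper monomial ideal with $I \cap \{X_1,\dots,X_m\} = \emptyset$, every minimal monomial generator of $I$ has total degree at least $2$, so $I \subseteq (X_1,\dots,X_m)^2$. The standard $\N$-grading on $R[X_1,\dots,X_m]$ descends to $A$, giving $A_0 = R$ and $A_1 \cong R^m$ (with the images of $X_1,\dots,X_m$ as a free basis). In particular the rank of $A_1$ over $R$ equals $m$, and analogously for $A'$.

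The first difficulty is that this grading is not a priori canonical: an abstract $R$-algebra isomorphism $\phi: A \to A'$ need not respect it. The plan is to identify the irrelevant ideal $\mathfrak{m} = \bigoplus_{k\ge1} A_k$ intrinsically, as the unique maximal ideal of $A$ whose $\mathfrak{m}$-adic associated graded ring is canonically isomorphic to $A$ itself (equivalently, the unique maximal ideal such that $A$ is a non-negatively graded $R$-algebra with irrelevant ideal $\mathfrak{m}$). Since this characterization depends only on the $R$-algebra structure of $A$, the rank $m = {\rm rank}_R(\mathfrak{m}/\mathfrak{m}^2)$ is an intrinsic invariant, and $m = m'$ follows.

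The harder task is to match individual variables, not merely their $R$-span. The plan is to recover the set of nonzero residues of monomials in $A$, up to multiplication by units of $R$. This set forms a pointed commutative monoid ${\rm Mon}(A)/R^\times$ under multiplication, with the class of $0 \in A$ as the absorbing element, isomorphic to the quotient of $\N^m$ obtained by collapsing the exponent vectors appearing in $I$ to a single absorbing point. The proposal is to give an $R$-algebra-intrinsic description of this monoid --- for instance as the set of homogeneous elements $a \in A$ (with respect to the canonical grading identified above) for which the principal ideal $aA$ is itself generated by the class of a monomial, modulo the action of $R^\times$. Once the monoid is recognized intrinsically, its unique minimal generating set is exactly the classes of $\{X_1,\dots,X_m\}$, yielding both a bijection between the variables and a bijection carrying the monomial generators of $I$ to those of $I'$.

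I expect the main obstacle to be this last step: giving a purely algebraic, basis-free characterization of the monomial residues. Arbitrary $R$-linear combinations of monomials in $A$ can be hard to distinguish from monomials by elementary ring-theoretic invariants, so the rigidity of the monoid ${\rm Mon}(A)/R^\times$ under abstract $R$-algebra isomorphisms is precisely the content of Gubeladze's theorem. The plan for this step is to reduce to the case where $R$ is a field (or $R = \Z$) by base change, and then exploit the seminormality of the monomial monoid together with an analysis of the multiplicative structure of $A$ modulo its group of units, following the combinatorial arguments in \cite{Gub}.
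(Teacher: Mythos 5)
Your proposal does not reach a proof, and it is worth noting first that the paper itself does not prove this statement either: it is quoted as Gubeladze's theorem and used as a black box, the proof residing entirely in the cited paper \cite{Gub}. So your argument must stand on its own, and it has two genuine gaps. The first is that your intrinsic characterization of the grading is false. Take $I=0$ (a legitimate monomial ideal containing no variable) and $A=R[X]$. For every $a\in R$ the ideal $(X-a)$ is the irrelevant ideal of a non-negative grading of $A$ with degree-zero part $R$, namely the grading by powers of the variable $X-a$; so there is no \emph{unique} ideal with the property you describe, and when $R$ is not a field these ideals are not even maximal, so ``unique maximal ideal'' fails twice over. The clause ``associated graded ring canonically isomorphic to $A$'' has the same defect: there is no canonical ring map between $A$ and $\mathrm{gr}_{\mathfrak{m}}(A)$, and for $A=R[X]$ every choice $\mathfrak{m}=(X-a)$ gives an associated graded ring abstractly isomorphic to $A$. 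The existence of non-graded automorphisms (translations $X_i\mapsto X_i+r$, triangular substitutions such as $X\mapsto X+Y^2$) is exactly the difficulty that makes the theorem hard; it cannot be removed by declaring the grading canonical.

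The second gap is the decisive one: your recovery of the set of monomial residues from the bare $R$-algebra structure is circular as stated. You propose to single them out as the homogeneous elements $a$ for which ``the principal ideal $aA$ is generated by the class of a monomial'' --- a condition that presupposes knowing which classes are monomials, which is precisely what is to be characterized. You then acknowledge the problem and fall back on ``following the combinatorial arguments in \cite{Gub}.'' That deferral is not a technical sub-lemma; the rigidity of the monomial monoid under abstract $R$-algebra isomorphisms \emph{is} the theorem (Gubeladze's solution of the isomorphism problem for commutative monoid rings, a substantial paper in its own right). Your preliminary observations are correct --- $I\subseteq(X_1,\dots,X_m)^2$, so $A_1\cong R^m$ for the standard grading, and the monomial monoid of $\mathbb{N}^m$ collapsed along $I$ has the classes of the variables as its unique minimal generating set --- but the two load-bearing steps are, respectively, false as stated and borrowed wholesale from the result you are trying to prove.
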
 

{\it Proof of Theorem \ref{Wal}}. The implications (a) $\Rightarrow$ (b) $\Rightarrow$ (c) are obvious and the implication (d) $\Rightarrow$ (a) was proven earlier, see Proposition \ref{prop32}. Hence the main issue is to prove the implication (c) $\Rightarrow$ (d). 

Let $\ell, \ell'$ be two generic length vectors so that the graded cohomology rings $H^{(d-1)\ast}(E_{d}(\ell);\Z_2)$ and $H^{(d-1)\ast}(E_{d}(\ell');\Z_2)$ are isomorphic. 
Without loss of generality we can assume that both $\ell$ and $\ell'$ are ordered.

By Proposition \ref{ECohomology}, the algebra $H^{(d-1)*}(E_{d}(\ell);\Z_{2})$ has the form 
$$\Z_{2}[Z_{1}, \dots, Z_{n}]/K$$ 
where $K$ is the monomial ideal generated multiplicatively by the squares $Z^{2}_{1}, \dots, Z^2_n$ 
as well as by monomials $Z_{j_{1}} \dots Z_{j_{k}}$ such that $j_1<\dots < j_k<n$ and the set 
$\{j_1, j_2, \dots, j_k, n\}$ 
is long with respect to $\ell$. 

It is well-known that $E_{d}(\ell) = \emptyset$ iff the set $\{n\}$ is long with respect to $\ell$. Clearly any two ordered length vectors $\ell$ with this property lie in the same chamber 
and hence we may assume below that $E_{d}(\ell) \not= \emptyset \not=E_{d}(\ell')$.
Hence $Z_{n} \notin K$. Furthermore for $j =1, \dots, n-1$ we have $Z_{j} \in K$ iff the two-element subset $\{j,n\}$ is long with respect to $\ell$ . Denote 
$$i=\max \{j: \text{ }\{j,n\} \text{ is short w. r. t. }\ell\},$$ 
$$i'=\max \{j: \text{ }\{j,n\} \text{ is short w. r. t. }\ell'\}.$$
There is an isomorphism 
$$\Z_{2}[Z_{1}, \dots, Z_{n}]/K \simeq \Z_{2}[Z_{1}, \dots, Z_{i},Z_{n}]/I,$$ 
where $I \subset \Z_{2}[Z_{1}, \dots, Z_{i},Z_{n}]$ is the ideal generated multiplicatively by the squares $Z^{2}_{j}$ for $j \in \{ 1, \dots, i,n \}$ and by 
the monomials $Z_{j_{1}} \dots Z_{j_{k}}$ so that $J=\{j_{1}, \dots, j_{k}\}\subset \{ 1, \dots, i\}$ and $J\cup \{n\}$ is long w. r. t. $\ell$.

Similarly, the ring $H^{(d-1)*}(E_{d}(\ell');\Z_{2})$ is isomorphic to 
$\Z_{2}[Z_{1}, \dots, Z_{i'},Z_{n}]/I',$ where the monomial ideal $I'$ is defined analogously 
to $I$ with the words {\it \lq\lq long w.r.t. $\ell$\rq\rq} replaced by {\it \lq\lq long w.r.t. $\ell'$\rq\rq.}

Using Gubeladze's Theorem, an isomorphism
$$\Z_{2}[Z_{1}, \dots, Z_{i},Z_{n}]/I \simeq \Z_{2}[Z_{1}, \dots, Z_{i'},Z_{n}]/I'$$
implies $i=i'$ and the existence of a permutation $\sigma$ of $\{1, \dots, i, n\}$ so that a subset $J \subset \{1, \dots, i, n\}$ with $n \in J$ is long with respect to $\ell$ if and only if the subset $\sigma(J)$ is long with respect to $\ell'$. 

Extend $\sigma$ to a permutation of $\{1, \dots, n\}$ by defining $\sigma(j)=j$ for $j=i+1, \dots, n-1$. Let $\alpha$ denote the transposition of the indices $\sigma(n)$ and $n$ and let $\alpha(\ell')$ denote the length vector obtained from $\ell'$ by interchanging the entries with these two indices.

We know that a subset $J \subset \{1, \dots, n\}$ with $n \in J$ and $i+1, \dots, n-1 \notin J$ is long with respect to $\ell$ if and only if $(\alpha \circ \sigma)(J)$ is long with respect to 
$\alpha(\ell')$. Since for every $j \in \{ i+1, \dots, n-1 \}$, the subset $\{j,n\}$ is long with respect to both $\ell$ and $\alpha(\ell')$, we see that under $\alpha \circ \sigma$ the sets 
$$\{J \subset \{1, \dots, n\}: J \, \text{is long with respect to}\,  \ell \, \, \mbox{and}\,  \, n \in J\}$$ 
and 
$$\{J \subset \{1, \dots, n\}: J \, \text{is long with respect to}\,  \alpha(\ell') \, \, \mbox{and}\,  \, n \in J\}$$ 
are identified. 

Next we recall that according to Lemma 4 from \cite{Fa4} two generic length vectors $\ell, \ell' \in \R^n_>$  
lie in the same chamber if and only if the following condition holds: a subset $J \subset \{1, \dots, n\}$ containing
$n$ is short with respect to $\ell$ if and only if it is short with respect to $\ell'$. 

Returning to the proof and applying Lemma 4 in \cite{Fa4} we see that $\ell$ and $\alpha(\ell')$ lie in the same chamber. Hence $\ell$ and $\ell'$ lie in the same chamber after a permutation of the entries. 

The proof of Theorem \ref{Wal} is now complete.   \qed     

We believe that a generalization of Theorem \ref{Wal} allowing nongeneric length vectors is also true: 
if $d>2$ and 
two length vectors $\ell$ and $\ell'$ are such that 
the graded algebras $H^\ast(E_d(\ell);\Z_2) $ and $H^\ast(E_d(\ell');\Z_2) $ are isomorphic then for some permutation 
$\sigma$ the vectors $\ell$ and $\sigma(\ell')$ {\it lie in the same stratum}, i.e. have identical sets of short, long and median subsets. 
This can be proven by using Proposition \ref{ECohomology}, the Theorem of Gubelabze and the Poincar\'e duality defect (as in Theorem 7 of \cite{FHS}). 

\section{Betti numbers of spaces of polygons}
The above arguments also allow to compute explicitly the $\Z_{2}$-Betti numbers of the spaces $E_{d}(\ell)$ in terms of the length vector $\ell$. 
These formulae use certain combinatorial quantities from \cite{Fa2} which 
 we now recall. 
For a length vector $\ell=(l_1, \dots, l_n)\in \R^n_>$, where $l_1\le \dots \le l_n$, 
the quantity $a_{k}(\ell)$ denotes the number of subsets $J \subset \{1, \dots, n\}$ which are short with respect to $\ell$ with $n \in J$ and $|J|=k+1$.
Similarly, $b_k(\ell)$ denotes the number of subsets $J \subset \{1, \dots, n\}$ which are median with respect to $\ell$ with $n \in J$ and $|J|=k+1$.

\begin{proposition}\label{EBettiNumbers}
If $d>2$ then for any ordered length vector $\ell\in \R^n_>$  one has
$$\dim_{\Z_2} H_{(d-1)k}(E_{d}(\ell);\Z_{2})=a_{k}(\ell)+b_{k}(\ell)+a_{k-1}(\ell)+b_{k-1}(\ell)$$
for $k=0, \dots, n-2$. Furthermore,
$$\dim_{\Z_2} H_{(d-1)k-1}(E_{d}(\ell);\Z_{2})=a_{n-k-2}(\ell)+a_{n-k-1}(\ell)$$ for $k=1, \dots n-1$.
All other homology groups of the space $E_d(\ell)$ vanish. 
\end{proposition}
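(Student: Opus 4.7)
The plan is to read off both Betti-number formulas from the machinery already established, with separate arguments for the degrees $(d-1)k$ (from the ring presentation) and $(d-1)k-1$ (from the long exact sequence of the pair $(W,E_d(\ell))$).

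For the degrees $(d-1)k$, my plan is to enumerate a monomial basis of the quotient $\Lambda_d(Z_1,\dots,Z_n)/I$ described in Proposition \ref{ECohomology}. The ideal $I$ is generated multiplicatively by squares and by the monomials $Z^K$ with $K\subset\{1,\dots,n-1\}$ and $K\cup\{n\}$ long. Since adding an element to a long set keeps it long, I would check that a monomial $Z^J$ lies in $I$ iff either $n\in J$ and $J$ is long (take $K=J\setminus\{n\}$), or $n\notin J$ and $J\cup\{n\}$ is long (take $K=J$). A basis of the degree-$(d-1)k$ summand is therefore given by the $Z^J$ with $|J|=k$ falling into one of two disjoint classes: $n\in J$ with $J$ short or median, contributing $a_{k-1}(\ell)+b_{k-1}(\ell)$; or $n\notin J$ with $J\cup\{n\}$ short or median, contributing $a_k(\ell)+b_k(\ell)$. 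Summing yields the first formula.

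For the remaining degrees I would invoke the long exact sequence of $(W,E_d(\ell))$ together with Lefschetz duality $H^i(W,E_d(\ell);\Z_2)\cong H_{n(d-1)-i}(W-E_d(\ell);\Z_2)$. By Proposition \ref{prop44}, $H_*(W-E_d(\ell);\Z_2)$ is concentrated in degrees divisible by $d-1$, and so is $H^*(W;\Z_2)$; the exact sequence therefore forces $H^i(E_d(\ell);\Z_2)=0$ whenever $i\not\equiv 0,-1\pmod{d-1}$, which establishes the vanishing claim. For $i=(d-1)k-1$ the sequence collapses to
\[
0\to H^{(d-1)k-1}(E_d(\ell);\Z_2)\to H^{(d-1)k}(W,E_d(\ell);\Z_2)\to H^{(d-1)k}(W;\Z_2),
\]
and Poincar\'e duality identifies the right-hand arrow with the map $j_{n-k}$ of (\ref{jk}); hence $\dim H^{(d-1)k-1}(E_d(\ell);\Z_2)=\dim\ker j_{n-k}$.

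The final step is to evaluate $\dim\ker j_{n-k}$. By Corollary \ref{Image}, $\operatorname{im} j_{n-k}$ is spanned exactly by the basis elements $[W_J],[V_K]$ of $H_{(d-1)(n-k)}(W-E_d(\ell);\Z_2)$ for which $J,K$ contain $n$, so $\dim\ker j_{n-k}$ equals the number of long subsets $J$ with $n\notin J$ and $|J|=k+1$ plus the number of long subsets $K$ with $n\notin K$ and $|K|=k$. Taking complements (using that $J$ is long iff $\bar J$ is short) rewrites these counts as short subsets containing $n$ of sizes $n-k-1$ and $n-k$ respectively, which by definition are $a_{n-k-2}(\ell)$ and $a_{n-k-1}(\ell)$. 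The main obstacle is not any single analytic step but the careful bookkeeping of cardinalities under the complement bijection and the indexing conventions; all the Morse-theoretic input has already been assembled in the earlier sections.
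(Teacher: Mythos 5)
Your proposal is correct and follows essentially the same route as the paper: both formulas ultimately come from the long exact sequence of the pair $(W,E_d(\ell))$, excision and Poincar\'e duality, with the dimensions read off from the kernel and cokernel of the inclusion-induced map $j_{n-k}$ and the complement bijection between long sets avoiding $n$ and short sets containing $n$. The only cosmetic difference is that you obtain the degree-$(d-1)k$ count by enumerating the monomial basis of the quotient ring in Proposition \ref{ECohomology} rather than by computing $\dim\operatorname{coker} j_{n-k}$ directly, but these are the same computation in different packaging.
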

\begin{proof}
As was shown in the proof of Proposition \ref{ECohomology}, the dimensions of the kernel and of the cokernel of the homomorphism 
$$j_{k} \colon H_{(d-1)k}(W-E_{d}(l);\Z_{2}) \rightarrow H_{(d-1)k}(W;\Z_{2})$$
induced by the inclusion are given by
$$\dim \ker  j_{k}=\dim A'_{k}+\dim B'_{k} =a_{k-2}(\ell)+a_{k-1}(\ell)$$ 
and $$\dim {\rm coker}  j_{k}=\dim C_{k}+\dim D_{k}=a_{n-k}(\ell)+b_{n-k}(\ell)+a_{n-k-1}(\ell)+b_{n-k-1}(\ell).$$ 
Using the homological long exact sequence of the pair $(W, W - E_{d}(\ell))$, excision and Poincar\'e duality (applied to a complement of an open regular neighbourhood of
$E_d(\ell)$ in $W$), one concludes that 

\begin{eqnarray*}
\dim H_{(d-1)k}(E_{d}(\ell);\Z_{2})&=&\dim H^{(d-1)k}(E_{d}(\ell);\Z_{2})\\ &=&\dim H_{(d-1)(n-k)}(W,W - E_{d}(\ell);\Z_{2})\\ & =& \dim {\rm coker} j_{n-k}
\\ &=& a_{k}(\ell)+b_{k}(\ell)+a_{k-1}(\ell)+b_{k-1}(\ell)
\end{eqnarray*}
and 
\begin{eqnarray*}
\dim H_{(d-1)k-1}(E_{d}(\ell);\Z_{2})&=&\dim H^{(d-1)k-1}(E_{d}(\ell);\Z_{2})\\
&=&\dim H_{(d-1)(n-k)+1}(W,W - E_{d}(\ell);\Z_{2})\\
&=&\dim \ker j_{n-k}\\ &=& a_{n-k-2}(\ell)+a_{n-k-1}(\ell).
\end{eqnarray*}
The vanishing of all other homology groups of $E_d(\ell)$ 
follows from similar arguments using the observation that homology of $W$ and of $W-E_d(\ell)$ are trivial in all dimensions non-divisible by $d-1$. 
This completes the proof. 
\end{proof}

\begin{remark}\label{remone}
By Proposition \ref{EBettiNumbers} for $d>2$ the first two possibly non-vanishing Betti numbers of $E_{d}(\ell)$ 
are in dimensions $0$ and $d-2$. From the given formulae it is immediate that the zero-dimensional Betti number is always one assuming that $E_d(\ell)\not=\emptyset$. 
We obtain that for $d>2$ the space $E_{d}(\ell)$ is connected if it is nonempty. 
\end{remark}

\begin{remark}\label{remtwo}
For $d>2$ the $(d-2)$-Betti number can be expressed as
$$ \dim H_{d-2}(E_{d}(\ell);\Z_{2})=a_{n-3}(\ell)+a_{n-2}(\ell).$$
Recall that $a_{n-2}(\ell)$ is the number of short subsets $J \subset \{1, \dots, n\}$ with $n \in J$ and $|J|=n-1$. 
Equivalently, $a_{n-2}(\ell)$ is the number of one-element long subsets of $\{1, \dots, n-1\}$. 
Thus, if $E_{d}(\ell) \neq \emptyset$, we have $a_{n-2}(\ell)=0$. 
Similarly, $a_{n-3}(\ell)$ can be identified with the number of two-element  long subsets $J \subset \{1, \dots, n-1\}$. 
There can be at most one subset $J$ of this type, namely if $\ell$ is ordered then we must have $J=\{n-2,n-1\}$. 
Thus the homology group $H_{d-2}(E_{d}(\ell);\Z_{2})$ either vanishes or has dimension one.

Moreover, in the case when $d>2$ and $\dim H_{d-2}(E_{d}(\ell);\Z_{2})\not=0$ one may characterize the diffeomorphism type of $E_d(\ell)$ completely. Namely, in this case $E_d(\ell)$ is diffeomorphic to the product 
$$V_2(\R^d) \times S^{d-1} \times \dots \times S^{d-1}$$
of the Stiefel manifold $V_2(\R^d)$ and $n-3$ copies of the sphere $S^{d-1}$. We will leave this statement as an exercise for the reader. 
\end{remark}

Let us now consider some examples:

{\it Example 1.} For $n\ge 3$ consider the following length vector
$$l_{1}= \dots = l_{n-1}= a, \quad l_{n}=1-a(n-1)$$ 
with 
$$\frac{1}{2(n-1)}\, \leq\,  a \, \leq \, \min\left\{\frac{1}{2(n-2)}, \frac{1}{n}\right\}.$$
Here every subset $J \subset \{1, \dots, n\}$ with $n \in J$ and $|J|>1$ is long. Hence by Proposition \ref{EBettiNumbers} the nonvanishing Betti numbers are in dimensions $0$,  $d-1$, $(n-2)(d-1)-1$ and $(n-1)(d-1)-1$ and all are equal to $1$. 
Let us determine the space of polygons $E_d(\ell)$ explicitly in this case. 


Let $\ell'$ denote the length vector obtained from $\ell$ by deleting the last component and  consider the function 
$$f_{\ell'}: (S^{d-1})^{n-1} \rightarrow \R,$$
$$(u_{1}, \dots, u_{n-1}) \mapsto -\left|\sum \limits_{j=1, \dots, n-1} l_{j} u_{j}\right|^{2}.$$ 
We have $$E_{d}(\ell)=f_{\ell'}^{-1}(-l^{2}_{n}).$$ 
Furthermore $f_{\ell'}$ has minimum value $-L^{2}$ where 
$L=a(n-1)$ 
and the preimage $f_{\ell'}^{-1}(-L^{2})$ is the diagonal 
$\Delta \subset (S^{d-1})^{n-1}$. 
We note that $\Delta$ is a submanifold of codimension $(n-2)(d-1)$. 
Under our assumptions on the value of $a$ every $n$-dimensional length vector of the form $(a, \dots, a, l'_{n})$ with $l_{n} \leq l'_{n}< L$  is generic. Hence
$E_{d}(\ell)$ is the unit sphere bundle of the normal bundle of the diagonal $\Delta \subset (S^{d-1})^{n-1}$. It is easy to see that this normal  bundle $\eta$ is isomorphic to the Whitney sum of $n-2$ copies of the tangent bundle $\tau$ of $S^{d-1}$. 

In the case when $n=3$ the unit sphere bundle of $\tau$ can be identified with the Stiefel manifold $V_2(\R^d)$. Thus we obtain that for $n=3$ the manifold of polygons 
$E_d(\ell)$ is diffeomorphic to $V_2(\R^d)$. This is also consistent with our Remark \ref{remtwo} above since the two-element subset $\{n-2, n-1\}$ is long for $n=3$. 

In the case $n\ge 4$ the bundle $\eta= \tau\oplus \tau \oplus \dots\oplus \tau$ 
(having $n-2$ terms) is trivial. Indeed, denoting by $\theta^k$ the trivial bundle of rank $k$ over $S^{d-1}$ we 
observe that $\tau\oplus \theta^1$ is trivial and hence $\eta\oplus \theta^{n-2}$ is trivial. Since for $n\ge 4$ the rank $(d-1)(n-2)$ of $\eta$ is greater than the dimension $d-1$ of the base,
we obtain that for $n\ge 4$ triviality $\eta\oplus\theta^{n-2}\simeq \theta^{d(n-2)}$ implies that the bundle $\eta\simeq \theta^{(d-1)(n-2)}$ is trivial (see \cite{Hu}, Chapter 9, Theorem 1.5). Hence 
$$E_d(\ell) \simeq S^{(d-1)(n-2)-1}\times S^{d-1}$$
under the conditions indicated above.

{\it Example 2.} The two length vectors
$$\ell=(1,2,2,2,4,4)$$
and 
$$\ell'=(1,1,3,4,8,8)$$ 
are generic and the quantities $a_{k}(\ell)$ and $a_{k}(\ell')$ coincide for all $k$. Hence by the formulae of Proposition \ref{EBettiNumbers} the $\Z_{2}$-Betti numbers of 
$E_{d}(\ell)$ and of $E_{d}(\ell')$ are the same. However, $\ell$ and $\ell'$ lie in different chambers
since the set $J=\{1, 4,6\}$ is short with respect to $\ell$ but is long with respect to $\ell'$. 

If two ordered length vectors $\ell$ and $\ell'$ are such that for some permutation 
$\sigma: \{1, \dots, n\}\to \{1, \dots, n\}$ the vectors $\ell$ and $\sigma(\ell)$ lie in the same chamber, then the vectors $\ell$ and $\ell'$ lie in the same chamber. This is a consequence 
of Lemma 3 from \cite{FHS} which formally requires that the permutation $\sigma$ fixes one of the indices but the proof applies to the case of an arbitrary permutation. 

This example shows that in general the $\Z_{2}$-Betti numbers of the space $E_{d}(\ell)$ do not determine the orbit of the chamber of $\ell$ under the $\Sigma_n$-action.

\vspace{2ex}

 \begin{center}

\end{center}
\vspace{2ex}

\end{document}